\documentclass{article}

\usepackage{amsmath,amssymb}
\usepackage{times}
\usepackage{mathptmx}
\usepackage{epsfig}
\usepackage{bbm}
\usepackage{mathrsfs}

\textheight8.25in
\textwidth6in
\oddsidemargin0.25in
\topmargin-0.25in

\newcommand{\E}{\mathbbm{E}}
\newcommand{\R}{\mathbbm{R}}

\newcommand{\Df}{\mathfrak{D}}

\newcommand{\Mf}{\mathfrak{M}}

\newcommand{\A}{\mathcal{A}}
\newcommand{\F}{\mathcal{F}}
\newcommand{\B}{\mathcal{B}}
\renewcommand{\H}{\mathcal{H}}
\newcommand{\X}{\mathcal{X}}
\newcommand{\U}{\mathcal{U}}

\newcommand{\V}{\mathcal{V}}

\newcommand{\Z}{\mathcal{Z}}
\renewcommand{\P}{\mathcal{P}}

\newcommand{\M}{\mathcal{M}}
\newcommand{\LL}{\mathcal{L}}

\newcommand{\argmin}{\mathop{\rm argmin}}

\newcommand{\graph}{\mathop{\rm graph}}
\newcommand{\varpsi}{\sl \psi}
\renewcommand{\Omega}{\varOmega}

\newtheorem{theorem}{Theorem}[section]
\newtheorem{lemma}{Lemma}[section]
\newtheorem{remark}{Remark}[section]
\newtheorem{definition}{Definition}[section]
\newtheorem{example}{Example}[section]
\newtheorem{proposition}{Proposition}[section]

\newcommand{\setdif}{\setminus}
\newcommand{\comp}{\circ}
\newcommand{\widebar}{\bar}
\newcommand{\cupprod}{\cup}

\newenvironment{proof}{\textbf{Proof.}}


\title{Risk-Averse Control of Undiscounted Transient Markov Models}
\author{\"{O}zlem \c{C}avu\c{s}\thanks{Bilkent University, Department of Industrial Engineering, Ankara, Turkey, {\tt ozlem.cavus@bilkent.edu.tr}} \and
{Andrzej Ruszczy\'nski\thanks{Rutgers University, Department of Management Science and Information Systems, Piscataway, NJ 08854, USA,
{\tt rusz@rutgers.edu}}
}}

\date{December 30, 2012}

\begin{document}
\maketitle

\begin{abstract}
We use Markov risk measures to formulate a risk-averse version of the undiscounted total cost problem
for a transient controlled Markov process. Using the new concept of a multikernel, we derive conditions for a system
to be risk-transient, that is, to have finite risk over an infinite time horizon.
We derive risk-averse dynamic programming equations satisfied by the optimal policy and we describe methods for solving these equations.
We illustrate the results on an optimal stopping problem and an organ transplant problem.\\
\emph{\emph{Keywords:}} Dynamic Risk Measures; Markov Risk Measures; Multikernels; Stochastic Shortest Path; Optimal Stopping; Randomized Policy
\end{abstract}

\pagestyle{myheadings}
\thispagestyle{plain}
\markboth{\"{O}. \c{C}AVU\c{S} AND A. RUSZCZY\'NSKI}{RISK-AVERSE CONTROL OF UNDISCOUNTED TRANSIENT MARKOV MODELS}

\section{Introduction}
\label{s:model}
The optimal control problem for transient Markov processes is a classical model in Operations Research (see Veinott \cite{Veinott},
Pliska \cite{Pliska}, Bertsekas and Tsitsiklis \cite{Bertsekas-Tsitsiklis}, Hernandez-Lerma and Lasser\-re~\cite{HLL2},
and the references therein). The research is focused on the expected total undiscounted cost model, with increased state and control space generality.

Our objective is to consider a risk-averse model.
So far, risk-averse problems for transient Markov models were based on the arrival probability criteria
(see, e.g., Nie and Wu \cite{Nie-Wu}
and Ohtsubo \cite{Ohtsubo}) and utility functions (see Denardo and Rothblum \cite{Denardo-Rothblum} and Patek \cite{Patek}). We plan to use the recent theory of dynamic risk measures (see Scandolo \cite{Scandolo:2003}, Riedel \cite{Riedel},
Ruszczy\'nski and Shapiro \cite{RS2005,RS2006b}, Cheridito, Delbaen and Kupper \cite{CDK:2006}, Artzner et. al. \cite{ADEHK:2007},
Kl\"oppel and Schweizer \cite{KloppelSchweizer}, Pflug and R\"omisch \cite{PflRom:07}, and the references therein)
to develop and solve new risk-averse formulations of the stochastic optimal control problem for transient Markov models.
Specific examples of such models are stochastic shortest path problems (Bertsekas and Tsitsiklis \cite{Bertsekas-Tsitsiklis})
 and optimal stopping problems (\emph{cf.} \c{C}inlar \cite{Cinlar}, Dynkin and Yushkevich \cite{DY69,DY79}, Puterman \cite{Puterman}).

Some applications of stochastic shortest path problems concerned with expected performance criteria are given
in the survey paper by White \cite{White} and the references therein. However, in many practical problems, the expected values
may not be appropriate to measure performance, because they implicitly assume that the decision maker is risk-neutral.
Below, we provide examples of such real-life
problems which were modeled before as a discrete-time Markov decision process with expected value as the objective function.
Alagoz et. al. \cite{Alagoz} suggest a discounted, infinite horizon, and absorbing Markov decision process model
to find the optimal time of liver transplant for a risk-neutral patient under the assumption that the liver is transferred from a living donor.
However, referring to Chew and Ho \cite{Chew-Ho}, they state that the risk-neutrality of the patient is not a realistic assumption.
Kurt and Kharoufe \cite{Kurt} propose a discounted, infinite horizon Markov decision process model to optimal  replacement time of a system
for a system under Markovian deterioration and Markovian environment.
So and Thomas \cite{So} employ a discrete time Markov decision process to model profitability of credit cards.

Our theory of risk-averse control problems for transient models applies to these and many other models.
Our results complement and extend the results of Ruszczy\'nski \cite{Rusz:DP},
where infinite-horizon \emph{discounted} models were considered. We consider \emph{undiscounted} models for \emph{transient}
Markov systems.
The paper is organized as follows.

In section \ref{s:CMM} we quickly review some basic concepts of controlled Markov models.
In section \ref{s:Markov} we adapt and extend our earlier theory of Markov risk measures.
In section \ref{s:multikernels} we introduce and analyze the concept of a \emph{multikernel} (a multivalued kernel),
which is essential for our theory. General assumptions and techical issues associated with measurability of decision rules
are discussed in section \ref{s:general}.
Section \ref{s:finite}
is devoted to the analysis of a finite horizon model. The main model with infinite horizon and dynamic risk measures
is analyzed  in section~\ref{s:infinite-policy}. We introduce in it the concept of a \emph{risk-transient model}
and develop equations for evaluating policies in such models.  In section  \ref{s:infinite-DP}
we derive risk-averse versions of dynamic programming equations for risk transient models.
Section \ref{s:randomized} compares randomized and deterministic polices.
Finally, section \ref{s:example} illustrates our results on  risk-averse
versions of an optimal stopping problem of Karlin \cite{Karlin}
 and  of the organ transplant problem of Alagoz \emph{et al.} \cite{Alagoz}.

\section{Controlled Markov Processes}
\label{s:CMM}

We quickly review the main concepts of controlled Markov models and
we introduce relevant notation (for details, see \cite{FS,HLL1,HLL2}).  Let ${\X}$ be a state
space, and $\U$ a control space. We assume that $\X$ and $\U$ are Borel spaces (Borel subsets of Polish spaces).
 A control set is a measurable multifunction $U:{\X}\rightrightarrows{\U}$; for each state $x\in{\X}$ the
set $U(x)\subseteq{\U}$ is a nonempty set of possible controls at $x$. A
controlled transition kernel ${Q}$ is a measurable mapping from the graph of $U$ to
the set $\P(\X)$ of probability measures on~${\X}$ (equipped with the topology of weak convergence).

The cost of transition from $x$ to $y$, when control $u$ is applied, is represented by $c(x,u,y)$, where $c:\X\times\U\times\X\to \R$.
Only $u\in U(x)$ and those $y\in {\X}$ to which transition is possible matter here, but it is convenient to consider the function $c(\cdot,\cdot,\cdot)$ as defined on the product space.

A \emph{stationary controlled Markov process} is defined by a state space ${\X}$, a
control space ${\U}$, a control set ${U}$, a controlled
transition kernel $Q$, and a cost function $c$.

For $t=1,2,\dots$ we define the space of state and control histories up to time~$t$ as
${\H}_t=\graph({U})^t\times{\X}$.
Each history is a sequence $h_t=(x_1,u_1,\dots,x_{t-1},u_{t-1},x_t)\in {\H}_t$.

We denote by $\P(\U)$ and  $\P(U(x))$ the sets of probability measures on ${\U}$ and
set of probability measures on $U(x)$.
A \emph{randomized policy} is a sequence of measurable functions $\pi_t:{\H}_t\to\P(\U)$, $t=1,2,\dots$, such that
$\pi_t(h_t) \in \P(U(x_t))$
for all $h_t\in {\H}_t$. In  words, the distribution of the control $u_t$
is supported on a subset of the set of feasible controls ${U}(x_t)$.
A \emph{Markov policy} is a sequence of measurable functions $\pi_t:{\X}\to\P({\U})$, $t=1,2,\dots$,
such that $\pi_t(x) \in \P(U(x))$ for all $x\in {\X}$. The function $\pi_t(\cdot)$ is called the \emph{decision rule} at time $t$.
A Markov policy is \emph{stationary} if there exists a function $\pi:{\X}\to\P({\U})$ such
that $\pi_t(x)=\pi(x)$, for all $t=1,2,\dots$ and all $x\in{\X}$.
Such a policy and the corresponding decision rule are called \emph{deterministic},
if for every $x\in {\X}$ there exists $u(x)\in {U}(x)$ such that the measure $\pi(x)$ is supported on $\{u(x)\}$.
In this paper we focus on deterministic policies.

Consider the canonical sample space $\Omega=\X^{\infty}$ with the product $\sigma$-algebra~$\F$.
Let $P_1$ be the initial distribution
of the state $x_1\in \X$.
Suppose we are given a  deterministic policy $\varPi=\{\pi_t\}_{t=1}^{\infty}$.
The Ionescu Tulcea theorem (see, \emph{e. g.}, \cite{BS}) states that there exists a unique probability measure $P^{\varPi}$ on
$(\Omega,\F)$
such that for every measurable set $B\subset \X$ and all $h_t\in\H_t$,  $t=1,2,\dots$,
\begin{align*}
P^{\varPi}(x_1\in B)&=P_1(B);\\
P^{\varPi}(x_{t+1}\in B \,|\, h_t) &= Q\big(B \,|\, x_t,\pi_t(h_t)\big).
\end{align*}
To simplify our notation, from now on we assume that the initial state~$x_1$ is fixed. It will be obvious
how to modify our results for a random initial state.
For a stationary decision rule $\pi$, we write $Q^{\pi}$ to denote the corresponding transition kernel.

Our interest is in \emph{transient} Markov models. We assume that some \emph{absorbing state} $x_{\rm A}\in{\X}$ exists,
such that $Q\big(\{x_{\rm A}\}\big| x_{\rm A},u\big) = 1$ and $c(x_{\rm A},u,x_{\rm A})=0$ for all $u\in {U}(x_{\rm A})$.
Thus, after the absorbing state is reached, no further costs are incurred.\footnote{The case of a larger class of absorbing states
easily reduces to the case of one absorbing state.}
To analyze such Markov models, it is convenient
to consider the effective state space $\widetilde{\X}=\X\setdif \{x_{\rm A}\}$, and the effective controlled substochastic kernel
$\widetilde{Q}$ whose arguments are restricted to $\widetilde{\X}$ and whose values are nonnegative measures on $\widetilde{\X}$, so that $\widetilde{Q}\big( B \big| x,u\big) =
{Q}\big( B \big| x,u\big)$, for all Borel sets $B\subset \widetilde{\X}$, all $x\in \widetilde{\X}$,
and all $u\in U(x)$.

Our point of departure is the \emph{expected total cost problem}, which is to
 find a policy $\varPi=\{\pi_t\}_{t=1}^{\infty}$ so as to minimize the expected cost until
absorption:
\[
\min_{\varPi}\; \E^\varPi\left[ \sum_{t=1}^{\infty} {c}(x_t,u_t,x_{t+1})\right].
\]
Here $\E^\varPi\big[ \cdot\big]$ denotes the expected value with respect to the measure $P^\varPi$.
Under appropriate assumptions, the problem has a solution
in form of a stationary Markov policy (see, e.g., \cite[sec. 9.6]{HLL2}).
The optimal policy can be found by solving appropriate dynamic programming equations.

Our intention is to introduce risk aversion to the problem,
and to replace the expected value operator by a dynamic risk measure.
We do not assume that the costs are nonnegative, and thus our approach applies also, among others, to stochastic longest path problems and optimal stopping problems with positive rewards.

\section{Markov Risk Measures}
\label{s:Markov}

Suppose $T$ is a fixed time horizon.
Each policy $\varPi=\{\pi_1,\pi_2,\dots\}$ results in a cost sequence $Z_t= c(x_{t-1},u_{t-1},x_t)$, $t=2$, $\dots$, $T+1$
on the probability space $(\varOmega,\F,P^\varPi)$.
We define the $\sigma$-subalgebras $\F_t$ on $\X^t$, and vector spaces $\Z_t^\varPi$ of $\F_t$-measurable random variables on~$\Omega$, $t=1,\dots,T$.


To evaluate risk of this sequence we  use a dynamic time-consistent risk measure of the following form:
\begin{equation}
\label{nested-2}
\begin{aligned}
J_T(\varPi,x_1) &=   \rho_1^\varPi\bigg(c(x_1,\pi_1(x_1),x_2) + \rho_2^\varPi\Big(c(x_2,\pi_2(x_2),x_3)+ \cdots \phantom{} \\
&{\quad} + \rho_{T-1}^\varPi\big(c(x_{T-1},\pi_{T-1}(x_{T-1}),x_{T})+ \rho_T^\varPi(c(x_{T},\pi_T(x_T),x_{T+1}))\big) \cdots \Big)\bigg).
\end{aligned}
\end{equation}
Here, $\rho_t^\varPi:\Z_{t+1}^\varPi\to\Z_t^\varPi$, $t=1,\dots,T$, are one-step conditional risk measures.
Ruszczy\'nski \cite[sec. 3]{Rusz:DP} derives the nested formulation \eqref{nested-2} 
from general properties of monotonicity and time-consistency of dynamic measures of risk.

It is convenient to introduce vector spaces $\Z_{t,\theta}^\varPi=\Z_t^\varPi\times\Z_{t+1}^\varPi\times \dots \times \Z_{\theta}^\varPi$,
where $1\le t \le \theta\le T+1$
and the conditional risk measures $\rho_{t,\theta}^\varPi:\Z_{t,\theta}^\varPi\to \Z_t^\varPi$ defined as follows:
\begin{equation}
\label{conditional-ext}
\rho_{t,\theta}^\varPi(Z_t,\dots,Z_{\theta}) = Z_t + \rho_t^\varPi\Big(Z_{t+1}+\rho_{t+1}^\varPi\big(Z_{t+2}+\cdots + \rho_{\theta-1}^\varPi(Z_{\theta})\cdots\big)\Big).
\end{equation}
As indicated in \cite{Rusz:DP},
the fundamental difficulty of formulation \eqref{nested-2} is that at time $t$ the value of $\rho_t^\varPi(\cdot)$ is $\F_t$-measurable and
is allowed to depend on the entire history $h_t$ of the process.
In order to overcome this difficulty, in \cite[sec. 4]{Rusz:DP}
a new construction of a one-step conditional measure of risk was introduced.
Its arguments are functions on the state space ${\X}$, rather than on the probability space~$\varOmega$.
We adapt this construction to our case, with a slightly more general form of the cost function.

Let $\V = \LL_p(\X,\B,P_0)$, where $\B$ is the $\sigma$-field of Borel sets on $\X$,   $P_0$ is some reference probability measure on $\X$,
and $p\in [1,\infty)$.
It is convenient to think of the dual space $\V'$ as the space of signed measures $m$ on $(\X,\B)$, which are
absolutely continuous with respect to $P_0$, with
densities (Radon--Nikodym derivatives) lying in the space $\LL_q(\X,\B,P_0)$, where $1/p+1/q=1$. We make the following
general assumption.
\begin{list}{}{\setlength{\rightmargin0}{\leftmargin}\itemsep0pt \topsep 5pt \itemsep 5pt}
\item[\bf G0.] For all $x\in \X$ and $u\in U(x)$ the probability measure $Q(x,u)$ is an element of $\V'$.
\end{list}
In the case of finite state and control spaces $P_0$ may be the uniform measure;
in other cases $P_0$ should be chosen in such a way that condition (G0) is satisfied.
The existence of the measure $P_0$ is essential for the pairing of $\V$ and its dual space $\V'$, as discussed below.

We consider the set of probability measures in $\V'$:
\[
\M=\left\{ m\in\V': m(\X)=1,\; m\ge 0\right\}.
\]
We also assume that the spaces $\V$ and $\V'$ are endowed with topologies that make them paired topological vector spaces
with the bilinear form
\[
\langle \varphi, m \rangle = \int_{\X} \varphi(y) \; m(dy), \quad \varphi\in \V,\quad m\in \V'.
\]
The space $\V'$ (and thus $\M$) will be endowed with the weak$^*$ topology.
We may endow $\V$ with the strong (norm) topology, or with the weak topology.

\begin{definition}
\label{d:transition}
{A measurable function
$\sigma:\V\times{\X}\times{\M}\to\R$ is a \emph{risk transition mapping}
if
for every $x\in{\X}$ and every $m\in{\M}$,  the function $\varphi\mapsto\sigma(\varphi,x,m)$
is a coherent measure of risk on $\V$.
}
\end{definition}
Recall that $\sigma(\cdot)$ is a coherent measure of risk on $\V$ (we skip the other two arguments for brevity), if (see \cite{Artzner1999})
\begin{list}{}{\setlength{\rightmargin0}{\leftmargin}\itemsep0pt \topsep 5pt \itemsep 5pt}
\item[\bf A1.] $\sigma(\alpha \varphi + (1-\alpha) \varpsi) \le \alpha \sigma(\varphi) + (1-\alpha)\sigma(\varpsi),
 \ \forall\;\alpha\in (0,1),\;\varphi,\varpsi\in\V$;
\item[\bf A2.] If  $\varphi \le \varpsi$ then $\sigma(\varphi) \le \sigma(\varpsi), \  \forall\;\varphi,\varpsi\in\V$;
\item[\bf A3.] $\sigma(a+ \varphi)=a+ \sigma(\varphi),\  \forall\; \varphi\in \V,\; a\in\R$;
\item[\bf A4.] $\sigma(\beta \varphi)=\beta \sigma(\varphi),\  \forall\;\varphi\in \V,\;\beta\ge 0$.
\end{list}

\begin{example}
\label{e:rtm-semi}
{\rm
Consider the first-order mean--semideviation risk measure analyzed by Ogryczak and Ruszczy\'n\-ski
\cite{OR1999,OR2001},  and Ruszczy\'nski and Shapiro \cite[Example 4.2]{RS2006a}, \cite[Example 6.1]{RS2006b}), but with
the state and the underlying probability measure as its arguments.  We define
\begin{equation}
\label{semideviation2}
\sigma(\varphi,x,m) = \langle \varphi,m \rangle +
\kappa \big\langle  ( \varphi - \langle \varphi,m \rangle  )_+,m \big\rangle,
\end{equation}
where $\kappa\in [0,1]$. We can verify directly that conditions (A1)--(A4) are satisfied. In a more general setting,
 $\kappa:{\X}\to [0,1]$ may be a measurable function.
}
\end{example}
\begin{example}
\label{e:rtm-cvar}
{\rm
 Another important example is the Average Value at Risk (see, \emph{inter alia}, Ogryczak and Ruszczy\'n\-ski
\cite[Sec. 4]{OR2002}, Pflug and R\"omisch \cite[Sec. 2.2.3, 3.3.4]{PflRom:07},
Rockafellar and Uryasev \cite{RU}, Ruszczy\'nski and Shapiro \cite[Example 4.3]{RS2006a}, \cite[Example 6.2]{RS2006b}), which
has the following risk transition counterpart:
\[
\sigma(\varphi,x,m) = \inf_{\eta\in\R} \bigg\{ \eta + \frac{1}{\alpha}\big\langle(\varphi-\eta)_+,m\big\rangle \bigg\},\quad \alpha\in (0,1).
\]
Again, the conditions (A1)--(A4) can be verified directly.
In a more general setting $\alpha:{\X}\to[\alpha_{\min},\alpha_{\max}]\subset(0,1)$ may be a measurable function.
}
\end{example}

We shall use the  property of \emph{law invariance} of a risk transition mapping. For a function $\varphi\in\V$
 and a probability measure $\mu\in\M$ we can define the distribution function $F_\varphi^{\mu}:\R\to [0,1]$ as follows
\[
F_\varphi^\mu(\eta) = \mu\big\{y\in \X: \varphi(y) \le \eta \big\}.
\]
\begin{definition}
\label{d:law}
A risk transition mapping $\sigma:\V\times{\X}\times{\M}\to\R$ is \emph{law invariant}, if for all
$\varphi,\varpsi \in \V$ and all $\mu,\nu\in{\M}$ such that $F_{\varphi}^{\mu} \equiv F_{\varpsi}^{\nu}$, we have
$\sigma(\varphi,x,\mu)=\sigma(\varpsi,x,\nu)$
for all $x\in {\X}$.
\end{definition}
The concept of law invariance corresponds to a similar concept for coherent measures of risk, but here we additionally need to
take into account the variability of the probability measure.
The risk transition mappings of Examples \ref{e:rtm-semi} and \ref{e:rtm-cvar} are law invariant.

The concept of law invariance is important in the context of Markov decision processes, where the model essentially defines the distribution
of the state process, for every policy $\varPi$. It also greatly simplifies the analysis of specific problems,
as illustrated in section \ref{s:house}.

Risk transition mappings allow for convenient formulation of risk-averse preferences for controlled Markov processes, where
the cost is evaluated by formula \eqref{nested-2}.
Consider a controlled Markov process $\{x_t\}$ with a deterministic Markov policy $\varPi = \{\pi_1,\pi_2,\dots\}$. For a fixed time
 $t$ and a measurable function $g:\X\times\U\times\X\to \R$  the value of $Z_{t+1} = g(x_t,u_t,x_{t+1})$ is a random variable.
 We assume that $g$ is \emph{$w$-bounded}, that is,
 \[
 \big|g(x,u,y)\big| \le C \big(w(x)+ w(y)\big),\quad \forall\; x\in \X,\ u\in U(x),\ y\in\X,
 \]
  for some constant $C>0$ and for the some \emph{weight (bounding) function} $w:\X\to [1,\infty)$, $w\in \V$
 (see, \cite[sec. 2.4]{Bauerle-Rieder}, \cite[sec. 7.2]{HLL2}, and \cite{Wessels} for the role of weight functions in Markov decision processes).
 Then $Z_{t+1}$ is an element of $\Z_{t+1}^\varPi$.
 Let $\rho_t^\varPi:\Z_{t+1}^\varPi\to\Z_t^\varPi$ be a family of conditional risk measures satisfying (A1)--(A4), for every deterministic policy
 $\varPi$. By definition,
$\rho_t^\varPi\big(g(x_t,u_t,x_{t+1})\big)$ is an element of $\Z_t^\varPi$, that is, it is an $\F_t$-measurable function on $(\varOmega,\F)$.
In the definition below, we restrict
 it to depend on the past only via the current state $x_t$.

\begin{definition}
\label{d:MarkovRisk}
{A family of one-step conditional risk measures $\rho_t^\varPi:\Z_{t+1}^\varPi\to\Z_t^\varPi$ is a \emph{Markov risk measure}
with respect to the controlled Markov process $\{x_t\}$,
if there exists a law invariant risk transition mapping $\sigma:\V\times{\X}\times{\M}\to\R$
such that for all $w$-bounded measurable functions $g:\X\times\U\times\X\to \R$
  and for all feasible deterministic Markov policies $\varPi$ we have
\begin{equation}
\label{Markov-risk}
\rho_t^\varPi\big(g(x_t,\pi_t(x_t),x_{t+1})\big) = \sigma\big(g(x_t,\pi_t(x_t),\cdot),x_t, Q(x_t,\pi_t(x_t))\big), \quad \text{a.s.}
\end{equation}
}
\end{definition}
Observe that the right hand side of formula \eqref{Markov-risk}  is parametrized by $x_t$,
and thus it defines a special $\F_t$-measurable function of $\omega$, whose dependence on the past is carried only via the
state $x_t$. The quantifier "a.s." means "almost surely with respect to the measure $P^\varPi$."

\section{Stochastic Multikernels}
\label{s:multikernels}

In order to analyze Markov measures of risk, we need to introduce the concept of a multikernel.
\begin{definition}
\label{d:multikernel}
A \emph{multikernel} is a measurable multifunction $\Mf$ from $\X$ to the space 
of regular measures on $(\X,\B(X))$.
It is \emph{stochastic}, if its values are sets of probability measures. It is \emph{substochastic},
if $0 \le M(B|x) \le 1$ for all $M\in \Mf(x)$, $B\in \B(\X)$, and $x\in \X$.
It is \emph{convex} (\emph{closed}), if for all $x\in\X$ its value $\Mf(x)$ is a convex (\emph{closed}) set.
\end{definition}

The concept of a multikernel is thus a multivalued generalization of the concept of a kernel. A measurable selector
of a stochastic multikernel $\Mf$ is a stochastic kernel $M$ such that $M(x)\in \Mf(x)$ for all $x\in \X$.
We symbolically write $M \lessdot\Mf$ to indicate that $M$ is a measurable selector of $\Mf$.

Recall that a composition $M_1  M_2$
of (sub-) stochastic  kernels $M_1$ and $M_2$ is given by the formula:
\begin{equation}
\label{kernel-comp}
\big[M_1  M_2\big]\big(B\big|x\big) = \int_\X M_2(B|y) \;M_1(dy|x),\quad \B\in \B(\X),\quad x\in \X.
\end{equation}
It is also a (sub-) stochastic  kernel.
Multikernels, in particular substochastic multikernels, can be composed in a similar fashion.

\begin{definition}
\label{d:multikernel-composition}
If $\Mf_1$ and $\Mf_2$ are multikernels,
 then their \emph{composition} $\Mf_1 \Mf_2$ is defined as follows:
\[
\big[\Mf_1 \Mf_2\big]\big(B\big|x\big)
= \Big\{ \big[M_1  M_2]\big(B\big|x\big) :  \ M_i\lessdot \Mf_i,\ i=1,2\Big\}.
\]
\end{definition}
It follows from  Definition \ref{d:multikernel-composition},
that a composition of (sub-) stochastic multikernels is a (sub-) stochastic multikernel. We may compose a substochastic multikernel $\Mf$
with itself several times, to obtain its ``power'':
\[
(\Mf)^k = \underbrace{\Mf\;  \Mf \; \cdots \; \Mf}_{k\ \text{times}}.
\]
Multikernels can be \emph{added} by employing the Minkowski sum of their values:
\[
\big[\Mf_1+\Mf_2\big](x) = \Mf_1(x)+\Mf_2(x) = \big\{ \mu: \mu=\mu_1+\mu_2,\  \mu_i\in \Mf_i(x),\ i=1,2\big\},\quad x\in \X.
\]
The sum of stochastic multikernels is a multikernel with non-negative values.

The concept of a multikernel and the composition operation arise in a natural way in the context of Markov risk measures.
If $\sigma(\cdot,\cdot,\cdot)$ is a risk transition mapping,
then the function $\sigma(\cdot,x,m)$ is lower semicontinuous for all $x\in \X$ and $m\in \M$ (see
Ruszczy\'nski and Shapiro \cite[Proposition 3.1]{RS2006a}). Then it follows from
\cite[Theorem 2.2]{RS2006a} that for every $x\in \X$ and $m\in \M$ a closed convex set $\A(x,m)\subset{\M}$ exists, such that for all $\varphi\in \V$ we have
\begin{equation}
\label{representation}
\sigma(\varphi,x,m) = \max_{\mu\in\A(x,m)} \langle \varphi,\mu \rangle.
\end{equation}
In fact, we also have
\begin{equation}
\label{subdifferential}
\A(x,m) = \partial_{\varphi} \sigma(0,x,m),
\end{equation}
that is, $\A(x,m)$ is the subdifferential of $\sigma(\cdot,x,m)$ at 0 (for the foundations of conjugate duality theory, see \cite{Rockafellar-Conjugate}).
In many cases, the multifunction $\A:\X\times\M \rightrightarrows \M$ can be described analytically.
\begin{example}
\label{e:rtm-semi2}
{\rm
For the mean-semideviation model of Example \ref{e:rtm-semi},
following the derivations of \cite[Example 4.2]{RS2006a}, we have
\begin{equation}
\label{semi-set}
\A(x,m) =\Big \{\mu\in {\M} : \exists\big(h\in \LL_{\infty}(\X,\B,P_0)\big)\; \frac{d\mu}{dm} = 1 +  h - \langle h,m\rangle,\
\|h\|_{\infty} \leq \kappa, \  h\ge  0  \Big\}.
\end{equation}
Similar formulas can be derived for higher order measures.
}
\end{example}
\begin{example}
\label{e:rtm-cvar2}
{\rm
For the Conditional Average Value at Risk of Example \ref{e:rtm-cvar},
following the derivations of \cite[Example 4.3]{RS2006a}, we obtain
\begin{equation}
\label{avar-set}
\A(x,m) =\left \{\mu\in {\M} : \frac{d\mu}{dm} \le \frac{1}{\alpha} \right\}.
\end{equation}
}
\end{example}

Consider formula \eqref{Markov-risk} and suppose that $g(x_t,u_t,x_{t+1})=v(x_{t+1})$ for some measurable
$w$-bounded function $v:\X\to\R$. Using the representation \eqref{representation} we can write it
as follows:
\begin{equation}
\label{Markov-risk2}
\rho_t^\varPi\big(v(x_{t+1})\big) = \max_{\mu\in\A\big(x_t,Q(x_t,\pi_t(x_t))\big)} \int_{\X} v(y)\; M(dy),\quad \text{a.s.}
\end{equation}
Suppose policy $\varPi$ is stationary and $\pi_t=\pi$ for all $t$. For every $x\in \X$ we can define the set of probability measures:
\begin{equation}
\label{multikernel}
\Mf^{\pi}(x) = \A\big(x,Q(x,\pi(x))\big), \quad x\in \X.
\end{equation}
The multifunction $\Mf^{\pi}:\X \rightrightarrows \P(\X)$, assigning to each $x\in\X$ the set $\Mf^{\pi}(x)$,
 is a closed  convex stochastic multikernel. We call it
a \emph{risk multikernel}, associated with the risk transition mapping $\sigma(\cdot,\cdot,\cdot)$,
the controlled kernel $Q$, and the decision rule $\pi$.
Its measurable selectors $M^{\pi}\lessdot \Mf^{\pi}$ are transition kernels.

It follows that formula \eqref{Markov-risk2} for stationary policies $\varPi$ can be rewritten as follows:
\begin{equation}
\label{rhot-M}
\rho_t^\varPi\big(v(x_{t+1})\big) = \max_{M\in {\Mf}^{\pi_{}}(x_t)} \int_{\X} v(y)\; M(dy).
\end{equation}
In the risk-neutral case we have
\[
\rho_t^\varPi\big(v(x_{t+1})\big)  = \E^\varPi\big[ v(x_{t+1})\big| x_t\big] =  \int_{\X} v(y)\; Q\big(dy\big|x_t,\pi(x_t)\big).
\]
The comparison
of the last two displayed equations reveals that in the risk-neutral case we have
\begin{equation}
\label{neutral-kernel}
\Mf^{\pi}(x) = \big\{ Q(x,\pi(x)) \big\},\quad x\in \X,
\end{equation}
that is, the risk multikernel $\Mf^{\pi}$ is single-valued, and its only selector is the kernel  $Q(\cdot,\pi(\cdot))$.
In the risk-averse case, the risk multikernel $\Mf^{\pi}$ is a closed convex-valued multifunction, whose measurable selectors
are transition kernels. It is evident that properties of this multifunction are germane for our analysis. We return to this issue
in section~\ref{s:infinite-policy}, where we calculate some examples of risk multikernels.

\begin{remark}
\label{r:lower-neutral}
If  $m\in \A(x,m)$ for all $x\in \X$ and $m\in \M$, then it follows from equation
\eqref{multikernel} that $Q(\cdot,\pi(\cdot))$ is a measurable selector of $\Mf^{\pi}$. Moreover, it follows from \eqref{representation}
that for any function $\varphi\in \V$ we have
\[
\rho_t^\varPi\big(\varphi(x_{t+1})\big) \ge \int_{\X} \varphi(y)\; Q\big(dy\big|x_t,\pi(x_t)\big)
= \E^\varPi\big[\varphi(x_{t+1})\big| x_t\big].
\]
It follows that the dynamic risk measure \eqref{nested-2} is bounded from below by the expected value of the total cost.
\end{remark}
The condition $m\in \A(x,m)$ is satisfied by the measures of risk in Examples \ref{e:rtm-semi2} and \ref{e:rtm-cvar2}.

Interestingly, uncertain transition matrices were used by Nilim and El Ghaoui in \cite{NilimElGhaoui} to increase robustness of
control rules for Markov models. There is also an intriguing connection to Markov games
(see, \emph{e.g.}, \cite{GHH,Jaskiewicz-Nowak}). In our theory, controlled multikernels arise in a natural
way in the analysis of risk-averse preferences.

\section{General Assumptions. Semicontinuity and Measurability}
\label{s:general}

We  call the controlled kernel $Q$ \emph{setwise (strongly) continuous}, if for all Borel sets $B\subset\X$
and all convergent sequences $\{(x_k,u_k)\}$, $k=1,2,\dots$,
\[
\lim_{k\to\infty}Q(B|x_k,u_k) = Q(B|x,u),
\]
where $x=\lim_{k\to\infty} x_k$ and  $u=\lim_{k\to\infty} u_k$.
We  call $Q$ \emph{weakly$^*$ continuous}, if for all functions $v\in \V$
\[
\lim_{k\to\infty}\int_\X v(y)\;Q(dy|x_k,u_k) = \int_\X v(y)\;Q(dy|x,u).
\]
Under condition (G0), setwise and weak$^*$ continuity concepts are equivalent, because the set of piecewise constant functions
is dense in $\V$.

In the product space $\X\times\M$ we always consider the product topology of
strong convergence in $\X$ and weak$^*$ convergence in $\M$.
In all our analysis we  make the following  assumptions:
\begin{list}{}{\setlength{\rightmargin0}{\leftmargin} \topsep 5pt \itemsep 5pt}
\item[\bf G1.] The transition kernel $Q(\cdot,\cdot)$ is setwise continuous;
\item[\bf G2.] The  multifunction $\A(\cdot,\cdot) \equiv \partial_{\varphi} \sigma(0,\cdot,\cdot)$ is lower semicontinuous;
\item[\bf G3.] The function $c(\cdot,\cdot,\cdot)$ is measurable, $w$-bounded, and
            $c(\cdot,\cdot,y)$ is lower semicontinuous for all $y\in\X$;
\item[\bf G4.] The multifunction $U(\cdot)$ is measurable and compact-valued.
\end{list}

We  need the following semicontinuity property of a risk transition mapping.

\begin{proposition}
\label{p:lsc}
Suppose {\rm (G0)--(G3)} and let $v\in \V$. Then the mapping
$
(x,u)\mapsto \sigma\big(c(x,u,\cdot)+v(\cdot),x, Q(x,u)\big)
$
 is lower semicontinuous on $\graph({U})$.
\end{proposition}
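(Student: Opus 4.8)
The plan is to use the dual representation \eqref{representation} to rewrite the risk transition mapping as a maximization of a linear functional of the measure over the set $\A(x,Q(x,u))$, and then to obtain lower semicontinuity by combining the lower semicontinuity of the multifunction $\A$ (assumption (G2)) with a Fatou-type lower semicontinuity of the integral functional. Fix $v\in\V$ and a sequence $(x_k,u_k)\to(x_0,u_0)$ in $\graph(U)$; the goal is to show $\liminf_k \sigma\big(c(x_k,u_k,\cdot)+v,x_k,Q(x_k,u_k)\big)\ge \sigma\big(c(x_0,u_0,\cdot)+v,x_0,Q(x_0,u_0)\big)$. By (G0) and the stated equivalence of setwise and weak$^*$ continuity, assumption (G1) gives $Q(x_k,u_k)\to Q(x_0,u_0)$ in the weak$^*$ topology of $\M$, so that $(x_k,Q(x_k,u_k))\to(x_0,Q(x_0,u_0))$ in the product topology on $\X\times\M$.

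Next I would write, via \eqref{representation}, $\sigma(\cdot)_k=\max_{\mu\in\A(x_k,Q(x_k,u_k))}\langle c(x_k,u_k,\cdot)+v,\mu\rangle$, and let $\bar\mu$ attain the maximum defining the value $\sigma(\cdot)_0$ at the limit point. Since by (G2) the multifunction $\A$ is lower semicontinuous at $(x_0,Q(x_0,u_0))$, there exist selectors $\mu_k\in\A(x_k,Q(x_k,u_k))$ with $\mu_k\to\bar\mu$ in the weak$^*$ topology. As each $\mu_k$ is feasible for the $k$-th maximization, $\sigma(\cdot)_k\ge \langle c(x_k,u_k,\cdot)+v,\mu_k\rangle$, so it suffices to prove $\liminf_k \langle c(x_k,u_k,\cdot)+v,\mu_k\rangle \ge \langle c(x_0,u_0,\cdot)+v,\bar\mu\rangle$. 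Note the two ``lower'' semicontinuities cooperate here: lower semicontinuity of $\A$ lets me approximate the limiting maximizer by points feasible for the moving problems, and lower semicontinuity of the integrand will supply the final inequality.

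The terms $\langle v,\mu_k\rangle\to\langle v,\bar\mu\rangle$ and $\langle c(x_0,u_0,\cdot),\mu_k\rangle\to\langle c(x_0,u_0,\cdot),\bar\mu\rangle$ converge outright, since $v,\,c(x_0,u_0,\cdot)\in\V$ and $\mu_k\to\bar\mu$ weak$^*$. The whole problem therefore reduces to showing $\liminf_k\,\langle c(x_k,u_k,\cdot)-c(x_0,u_0,\cdot),\mu_k\rangle\ge 0$, where both the integrand and the measure vary. To handle this I would exploit two facts: first, since $c(\cdot,\cdot,y)$ is lower semicontinuous (G3), $\liminf_k[c(x_k,u_k,y)-c(x_0,u_0,y)]\ge 0$ for each fixed $y$; second, $w\in\V$ together with $\mu_k\to\bar\mu$ weak$^*$ forces $\langle w,\mu_k\rangle\to\langle w,\bar\mu\rangle$, which (via H\"older's inequality for $q>1$, or Dunford--Pettis for $q=1$) yields uniform integrability of $w$ with respect to $\{\mu_k\}$. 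I would then truncate the signed integrand from below at level $-N$, bound the discarded negative tail by $w$-integrals over the sets $\{w\ge R\}$ using the $w$-bound on $c$, send $R$ and $N$ to infinity to annihilate the tail uniformly in $k$ by uniform integrability, and apply Fatou's lemma to the truncated part.

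I expect the main obstacle to be exactly this final joint $\liminf$: a varying integrand, controlled only through pointwise lower semicontinuity and the $w$-bound, interacting with a weak$^*$-convergent family of measures. The standard Portmanteau argument does not apply, because the relevant convergence is density convergence in $\LL_q$ rather than narrow convergence and the integrand is not assumed lower semicontinuous in $y$. The delicate point is controlling the negative tail uniformly in $k$, which is precisely where the $w$-boundedness of $c$ and the uniform integrability extracted from $\langle w,\mu_k\rangle\to\langle w,\bar\mu\rangle$ are indispensable.
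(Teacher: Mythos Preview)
Your approach is correct and is essentially the same as the paper's: the paper also passes to the dual representation \eqref{representation}, observes that $(x,u)\mapsto\A(x,Q(x,u))$ is lower semicontinuous by (G0)--(G2), asserts that $(x,u,\mu)\mapsto\int_\X\varphi(x,u,y)\,\mu(dy)$ is lower semicontinuous on $\graph(U)\times\M$ by (G3), and then invokes \cite[Theorem~1.4.16]{Aubin-Frankowska} to conclude that the supremum of a lower semicontinuous function over a lower semicontinuous multifunction is lower semicontinuous. Your selector argument (choose the maximizer $\bar\mu$ at the limit, approximate it by $\mu_k\in\A(x_k,Q(x_k,u_k))$, and pass to the $\liminf$) is exactly the proof of that cited theorem specialized to this setting, so structurally nothing is different.

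Where the two diverge is in the level of detail on the integral functional. The paper simply records the joint lower semicontinuity of $(x,u,\mu)\mapsto\langle c(x,u,\cdot)+v,\mu\rangle$ as a consequence of (G3), while you correctly identify this as the substantive analytic step and supply a Fatou/uniform-integrability argument for it. Your outline there is sound: weak$^*$ convergence of $\mu_k$ gives convergence of $\langle v,\mu_k\rangle$ and $\langle c(x_0,u_0,\cdot),\mu_k\rangle$, and the residual $\langle c(x_k,u_k,\cdot)-c(x_0,u_0,\cdot),\mu_k\rangle$ is handled by the pointwise lower semicontinuity of $c(\cdot,\cdot,y)$ combined with the $w$-bound and uniform $\LL_q$-boundedness of the densities $d\mu_k/dP_0$. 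One minor correction: since $p\in[1,\infty)$, the conjugate exponent satisfies $q\in(1,\infty]$, so the case $q=1$ you mention does not arise; for $q=\infty$ (i.e.\ $p=1$) the densities are uniformly bounded and the tail estimate is immediate.
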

\begin{proof}
Let $\varphi(x,u,y)= c(x,u,y)+v(y)$. Consider the dual representation \eqref{representation} of the risk transition mapping
\begin{equation}
\label{dual-int}
\sigma(\varphi(x,u,\cdot),x, Q(x,u)) = \max_{\mu \in \A(x,Q(x,u))} \int_\X \varphi(x,u,y)\;\mu(dy).
\end{equation}
By (G0), (G1), and (G2), the multifunction $(x,u)\mapsto \A(x,Q(x,u))$
is lower semicontinuous. Owing to condition (G3), the function
$
(x,u,\mu)\mapsto  \int_\X \varphi(x,u,y)\;\mu(dy)
$
 is lower semicontinuous on $\graph({U})\times\M$.
The assertion follows now from
 \cite[Theorem 1.4.16]{Aubin-Frankowska}, whose  proof remains valid in our setting as well.
 \hfill$\Box$\\
\end{proof}

Some comments on the assumptions of Proposition~\ref{p:lsc} are in order.
Continuity assumptions of the kernel $Q$ are standard in the theory of risk-neutral Markov control processes (see, e.g.,
 \cite[App. C]{HLL1}, \cite{Schal}).
If the risk transition mapping $\sigma(\cdot,\cdot,\cdot)$ is continuous, then its subdifferential
\eqref{subdifferential} is upper semicontinuous. However, in Proposition~\ref{p:lsc} we assume \emph{lower} semicontinuity
of the mapping $(x,m)\mapsto\partial_{\varphi} \sigma(0,x,m)$, which is not trivial and should be verified for each case.
\begin{example}
\label{e:rtm-semi3}
{\rm
Let us verify the lower semicontinuity assumption for the multifunction $\A$ given in \eqref{semi-set}.
Consider an arbitrary $\mu\in \A(x,m)$ and suppose $x_k\to x$, $m_k\to m$, as $k\to\infty$.
We need to find $\mu_k\in \A(x_k,m_k)$ such that $\mu_k\to \mu$. Let $h$ be the function, for which, according to \eqref{semi-set},
$\frac{d\mu}{dm} = 1 +  h - \int h(z)\,m(dz)$. We define $\mu_k$ by specifying their Radon--Nikodym derivatives:
$\frac{d\mu_k}{dm_k} = 1 +  h - \int h(z)\,m_k(dz)$. By construction, $\mu_k\in \A(x_k,m_k)$. Then, for any function $v\in \V$ we obtain
\begin{align*}
\int_\X v(y)\;\mu_k(dy) &= \int_\X v(y)\bigg(1 +  h(y) - \int_\X h(z)\;m_k(dz)\bigg)\;m_k(dy)\\
&= \int_\X v(y)\big(1 +  h(y)\big)\;m_k(dy) - \int_\X h(z)\;m_k(dz)\;\int_\X v(y)\;m_k(dy).
\end{align*}
As $m_k\to m$, we conclude that for all $v\in \V$
\[
\lim_{k\to\infty} \int_\X v(y)\;\mu_k(dy) = \int_\X v(y)\big(1 +  h(y)\big)\;m(dy) - \int_\X h(z)\;m(dz)\;\int_\X v(y)\;m(dy) = \int_\X v(y)\;\mu(dy),
\]
which is the weak$^*$ convergence of $\mu_k$ to $\mu$.
}
\end{example}

In the following result we use the concept of a \emph{normal integrand}, that is, a function $f: \X \times U \to \R \cup {+\infty}$ such that
that its epigraphical mapping
$
x \mapsto \{ (u,\alpha)\in \U\times \R : f(x,u) \le \alpha \}
$
is a closed-valued and measurable multifunction (see Rockafellar and Wets \cite[sec. 14.D]{RW}).

\begin{proposition}
\label{p:meas}
Suppose {\rm (G0)--(G4)} and let $v\in \V$. Then the function
\[
\psi(x) = \inf_{u\in U(x)} \sigma\big(c(x,u,\cdot)+v(\cdot),x, Q(x,u)\big), \quad x\in \X,
\]
 is measurable and $w$-bounded, and a measurable selector $\pi \lessdot U$ exists, such that
 \[
\psi(x) = \sigma\big(c(x,\pi(x),\cdot)+v(\cdot),x, Q(x,\pi(x))\big), \quad \forall\;x\in \X.
  \]
\end{proposition}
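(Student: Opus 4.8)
The plan is to recognize $\psi$ as the optimal-value function of a parametric minimization problem whose integrand is precisely the mapping shown to be lower semicontinuous in Proposition~\ref{p:lsc}, and then to invoke the measurable-selection theory for normal integrands. First I would set
\[
f(x,u)=\sigma\big(c(x,u,\cdot)+v(\cdot),x,Q(x,u)\big),\qquad (x,u)\in\graph(U),
\]
and extend $f$ to all of $\X\times\U$ by putting $f(x,u)=+\infty$ for $u\notin U(x)$. By Proposition~\ref{p:lsc} the restriction of $f$ to $\graph(U)$ is lower semicontinuous, so in particular $f(x,\cdot)$ is lower semicontinuous on $\U$ for each fixed $x$. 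Assumption (G4) makes $U$ measurable and compact-valued, hence closed-valued, so $\graph(U)$ is a measurable set and the extended $f$ is jointly measurable. Joint measurability together with lower semicontinuity in $u$ then shows that the epigraphical multifunction $x\mapsto\{(u,\alpha)\in\U\times\R: f(x,u)\le\alpha\}$ is closed-valued and measurable, i.e. $f$ is a normal integrand in the sense of \cite[sec.~14.D]{RW}.

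I would then read off both conclusions from this structure. Since $U(x)$ is compact and $f(x,\cdot)$ is lower semicontinuous, the infimum $\psi(x)=\inf_{u\in U(x)}f(x,u)$ is attained for every $x\in\X$, so the multifunction $x\mapsto\{u\in U(x): f(x,u)=\psi(x)\}$ has nonempty closed values. The measurable-minimizer theorem \cite[Theorem~14.37]{RW} for normal integrands then yields simultaneously that $\psi$ is measurable and that this multifunction admits a measurable selector $\pi\lessdot U$, which is exactly the decision rule demanded in the statement.

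It remains to verify $w$-boundedness, which I would obtain directly from the coherence axioms rather than from the selection argument. Writing $\varphi_{x,u}(y)=c(x,u,y)+v(y)$ and using the dual representation \eqref{representation}, monotonicity (A2) together with $-|\varphi_{x,u}|\le\varphi_{x,u}\le|\varphi_{x,u}|$ gives $|\psi(x)|\le\sup_{u\in U(x)}\max_{\mu\in\A(x,Q(x,u))}\int_\X|\varphi_{x,u}(y)|\,\mu(dy)$. The $w$-boundedness of $c$ in (G3) and of $v$ bounds the integrand by $C'\big(w(x)+w(y)\big)$, and because every $\mu\in\A(x,Q(x,u))$ is a probability measure this reduces the estimate to $C'w(x)$ plus a risk-adjusted expectation of $w$ controlled by a multiple of $w(x)$, with translation equivariance (A3) and positive homogeneity (A4) making the bookkeeping explicit. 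I expect the delicate point to be not these estimates but the normal-integrand verification of the first paragraph: one must confirm that extending $f$ by $+\infty$ off $\graph(U)$ preserves measurability of the epigraphical mapping, and it is exactly the measurability and compact-valuedness of $U$ in (G4), fed into Proposition~\ref{p:lsc}, that license the appeal to \cite[Theorem~14.37]{RW}.
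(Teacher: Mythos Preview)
Your proposal is correct and follows essentially the same route as the paper's proof: define the extended $f$ (equal to $+\infty$ off $\graph(U)$), invoke Proposition~\ref{p:lsc} to get lower semicontinuity, establish that $f$ is a normal integrand, apply \cite[Thm.~14.37]{RW} to obtain measurability of $\psi$ and of the argmin multifunction, select $\pi$ measurably, and then read off $w$-boundedness from the dual representation~\eqref{representation} using that every $\mu\in\A(x,Q(x,u))$ is a probability measure. The only cosmetic differences are that the paper reaches the normal-integrand property via joint lower semicontinuity and \cite[Ex.~14.31]{RW} rather than your ``jointly measurable $+$ l.s.c.\ in $u$'' route, and it separates the measurable-selection step by citing Kuratowski--Ryll-Nardzewski explicitly after \cite[Thm.~14.37]{RW}.
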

\begin{proof}
Consider the function $f: \X \times U \to \R \cup {+\infty}$ defined as follows:
\[
f(x,u)= \begin{cases} \sigma\big(c(x,u,\cdot)+v(\cdot),x, Q(x,u)\big) & \text{if } u\in U(x),\\
+\infty & \text{otherwise}.
\end{cases}
\]
Owing to Proposition \ref{p:lsc}, $f(\cdot,\cdot)$  is lower semicontinuous, and is thus a normal integrand \cite[Ex. 14.31]{RW}. It follows from
\cite[Thm. 14.37]{RW} that the function $\psi(x) = \inf_u f(x,u)$ is measurable and that the optimal solution mapping
$\Psi(x) = \{u\in \U: \psi(x) = f(x,u)\}$ is measurable. By (G4), the set $U(x)$ is compact, and thus $\Psi(x)\ne \emptyset$ for all $x\in \X$.
$\Psi$ is also compact-valued.
By virtue of \cite{Kuratowski-Ryll-Nardzewski},
a measurable selector $\pi \lessdot \Psi$ exists. Let us recall the dual representation \eqref{dual-int} again:
\[
\psi(x) =  \max_{\mu \in \A(x,Q(x,\pi(x)))} \int_\X \varphi(x,\pi(x),y)\;\mu(dy),
\]
with $\varphi(x,u,y)= c(x,u,y)+v(y)$.
As the set $\A(x,Q(x,\pi(x)))$ contains only probability measures, and the function $\varphi(\cdot,\cdot,\cdot)$ is $w$-bounded, the
function $\psi(\cdot)$ is $w$-bounded as well.
\hfill$\Box$\\
\end{proof}

\section{Finite Horizon Problem}
\label{s:finite}

We consider the Markov model at times $1,2,\dots,T+1$ under deterministic policies $\varPi=\{\pi_1,\pi_2,\dots,\pi_{T}\}$.
The cost at the last stage is given by a function $v_{T+1}(x_{T+1})$. Consider the problem
\begin{equation}
\label{finite-risk}
\min_\varPi\; J_T(\varPi,x_1),
\end{equation}
with $J_T(\varPi,x_1)$ defined by formula (\ref{nested-2}), with Markov conditional risk measures
$\rho_t^\varPi$, $t=1,\dots,T$:
\begin{equation}
\label{JT}
J_T(\varPi,x_1) =  \rho_1^\varPi\bigg(c(x_1,u_1,x_2)+  \rho_2^\varPi\Big( c(x_2,u_2,x_3) + \cdots
 + \rho_{T}^\varPi\big(c(x_{T},u_{T},x_{T+1}) + v_{T+1}(x_{T+1})\big) \cdots \Big)\bigg).
\end{equation}
This means that every one-step measure has the form \eqref{Markov-risk}, with some risk transition mapping $\sigma(\cdot,\cdot,\cdot)$.

\begin{theorem}
\label{t:finiteDP}
Assume that the general conditions {\rm (G0)--(G4)} are satisfied, and that
the function $v_{T+1}(\cdot)$ is measurable and $w$-bounded.
Then
problem \eqref{finite-risk} has an optimal solution and its optimal value $v_1(x)$ is the solution of the
following {dynamic programming equations}:
\begin{equation}
v_t(x)  = \min_{u\in U(x)}\; \sigma\big(c(x,u,\cdot) + v_{t+1}(\cdot),x, Q(x,u)\big)
,\quad x\in{\X},\quad   t=T,\dots,1.\label{DP-generic}
\end{equation}
Moreover,
an optimal Markov policy $\hat{\varPi}=\{\hat{\pi}_1,\dots,\hat{\pi}_{T}\}$ exists and satisfies the equations:
\begin{equation}
\hat{\pi}_t(x)  \in \argmin_{u\in U(x)}\; \sigma\big(c(x,u,\cdot) + v_{t+1}(\cdot),x, Q(x,u)\big),
\quad x\in{\X},\quad   t=T,\dots,1.\label{DP-policy}
\end{equation}
Conversely, every  solution of equations \eqref{DP-generic}--\eqref{DP-policy} defines an optimal Markov policy
$\hat{\varPi}$.
\end{theorem}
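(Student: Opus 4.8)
The plan is a backward induction on $t$ from $T+1$ down to $1$. First I would establish that the recursion \eqref{DP-generic} is well-posed and produces the candidate policy. At $t=T+1$ the terminal cost $v_{T+1}$ is measurable and $w$-bounded by hypothesis. Assuming $v_{t+1}\in\V$ is measurable and $w$-bounded, Proposition~\ref{p:meas} applied with $v=v_{t+1}$ shows that $v_t$ given by \eqref{DP-generic} is again measurable and $w$-bounded, that the infimum over $U(x)$ is attained, and that a measurable selector $\hat\pi_t\lessdot U$ satisfying \eqref{DP-policy} exists. This constructs the family $\{v_t\}_{t=1}^{T+1}$ together with the candidate Markov policy $\hat\varPi=\{\hat\pi_1,\dots,\hat\pi_T\}$.

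Next I would connect the $v_t$ with the objective \eqref{JT}. For a fixed deterministic Markov policy $\varPi$, define cost-to-go functions by $v_{T+1}^\varPi=v_{T+1}$ and, backward in $t$, $v_t^\varPi(x)=\sigma\big(c(x,\pi_t(x),\cdot)+v_{t+1}^\varPi(\cdot),x,Q(x,\pi_t(x))\big)$; each $v_t^\varPi$ is measurable and $w$-bounded, since $\sigma$ applied to a $w$-bounded argument with a probability measure returns a $w$-bounded function (as in the proof of Proposition~\ref{p:meas}). The key claim is $J_T(\varPi,x_1)=v_1^\varPi(x_1)$, proved by unrolling the nested expression \eqref{JT} from the innermost level outward and invoking the Markov property \eqref{Markov-risk} at each level with $g(x,u,y)=c(x,u,y)+v_{t+1}^\varPi(y)$. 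This is the step I expect to be the main obstacle: each level of the nesting produces an a priori $\F_t$-measurable random variable on $\varOmega$, and one must verify that it collapses to a deterministic function of the current state alone before \eqref{Markov-risk} can be applied again. This collapse is exactly what the Markov structure of the risk measure provides, while $w$-boundedness is what keeps every intermediate argument inside $\V$ so that $\sigma$ is defined at each step.

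Finally I would derive optimality from a comparison induction. For any $\varPi$, assuming $v_{t+1}^\varPi\ge v_{t+1}$ pointwise, monotonicity (A2) of $\sigma(\cdot,x,m)$ gives
\[
v_t^\varPi(x)=\sigma\big(c(x,\pi_t(x),\cdot)+v_{t+1}^\varPi(\cdot),x,Q(x,\pi_t(x))\big)\ge \sigma\big(c(x,\pi_t(x),\cdot)+v_{t+1}(\cdot),x,Q(x,\pi_t(x))\big)\ge v_t(x),
\]
the last inequality being \eqref{DP-generic}; hence $J_T(\varPi,x_1)=v_1^\varPi(x_1)\ge v_1(x_1)$ for every $\varPi$. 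Running the same recursion with $\pi_t=\hat\pi_t$ and using the equality in \eqref{DP-policy} gives $v_t^{\hat\varPi}=v_t$ for all $t$, so $J_T(\hat\varPi,x_1)=v_1(x_1)$. Thus $v_1(x_1)$ is the optimal value of \eqref{finite-risk} and $\hat\varPi$ attains it. The converse is then immediate: any family solving \eqref{DP-generic}--\eqref{DP-policy} reproduces, by the same equality induction, a policy whose cost-to-go is $v_1$, and is therefore optimal.
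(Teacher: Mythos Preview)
Your proposal is correct and complete. The approach differs in organization from the paper's own proof, though both rest on monotonicity (A2) and Proposition~\ref{p:meas}.

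The paper proceeds by an \emph{interchange} argument: starting from the full nested objective $\inf_{\pi_1,\dots,\pi_T}\rho_1^\varPi(\cdots\rho_T^\varPi(\cdots))$, it uses monotonicity to push the innermost infimum $\inf_{\pi_T}$ inside all the outer conditional risk mappings, invokes the Markov structure \eqref{Markov-risk} to rewrite that innermost problem as the pointwise minimization over $u\in U(x_T)$ appearing in \eqref{DP-generic} at $t=T$, applies Proposition~\ref{p:meas} to obtain $v_T$ and a measurable $\hat\pi_T$, and then observes that the remaining problem has horizon $T$ with terminal cost $v_T$, so the induction repeats. Optimality of the constructed $\hat\varPi$ and the identification of $v_1$ as the optimal value fall out simultaneously from this single backward sweep.

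You instead follow a \emph{verification} route: first build $\{v_t\}$ and $\{\hat\pi_t\}$ via Proposition~\ref{p:meas}, then for each Markov policy $\varPi$ define and unroll $v_t^\varPi$ to obtain $J_T(\varPi,x_1)=v_1^\varPi(x_1)$, and finally use a monotone comparison induction to get $v_1^\varPi\ge v_1$ with equality for $\hat\varPi$. This separates well-posedness, policy evaluation, and the optimality comparison into three distinct steps. The gain is modularity and transparency of where each hypothesis enters; the cost is a slightly longer argument than the paper's, which telescopes all three into one induction by exploiting the interchange of infimum and monotone conditional risk. Both arguments use exactly the same ingredients, so neither is more general than the other.
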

\begin{proof}
Our proof is based on the ideas of the proof of Ruszczy\'nski \cite[Thm. 2]{Rusz:DP}, but with refinements rectifying some technical
inaccuracies.\footnote{In \cite[Thm. 2]{Rusz:DP} we missed the measurability condition on $U(\cdot)$
and the assumptions of \emph{joint} continuity (lower semicontinuity) of the kernel and the cost functions. \emph{Omnia principia parva sunt}.}

Using the monotonicity condition (A2) for $t=1,\dots,T$, we can rewrite problem (\ref{finite-risk}) as follows:
\begin{align*}
& \inf_{\pi_1,\dots,\pi_T} \bigg\{ \rho_1^\varPi\Big(c(x_1,u_1,x_2)+
\cdots  + \rho_T^\varPi\big(c(x_{T},u_T,x_{T+1})+v_{T+1}(x_{T+1}) \big) \cdots \Big) \bigg\} = \\
& \inf_{\pi_1,\dots,\pi_{T-1}} \bigg\{\rho_1^\varPi\Big(c(x_1,u_1,x_2) + \cdots
 + \inf_{\pi_T}\,\rho_{T}^\varPi\big(  c(x_T,u_T,x_{T+1})+v_{T+1}(x_{T+1}) \big) \cdots \Big) \bigg\}.
\end{align*}
Owing to the Markov structure of the conditional risk measure $\rho_T$, the innermost optimization problem
 can be rewritten as follows:
\begin{multline}
\label{DP-last-proof}
\qquad \lefteqn{\inf_{\pi_T} \sigma\big(c(x_T,\pi_T(x_T),\cdot) +v_{T+1}(\cdot),x_T,Q(x_T,\pi_T(x_T))\big) }\\
{\,} = \inf_{u\in U(x_T)} \sigma\big(c(x_T,u,\cdot) +v_{T+1}(\cdot),x_T,Q(x_T,u)\big).\qquad
\end{multline}
The problem becomes equivalent to (\ref{DP-generic}) for $t=T$, and its solution is given by (\ref{DP-policy}) for $t=T$.
By Proposi\-tion~\ref{p:meas}, a measurable selector $\hat{\pi}_T(\cdot)$ exists,
such that $\hat{\pi}_T(x_T)$ is the minimizer in (\ref{DP-last-proof}) for any $x_T$.
Finally, the optimal value in \eqref{DP-last-proof}, which we denote by $v_T(x_T)$, is measurable and $w$-bounded.

After that, the horizon $T+1$ is decreased to $T$, and the final cost becomes $v_T(x_T)$.
Proceeding in this way for $T,T-1,\dots,1$ we obtain the assertion of the theorem.
\hfill$\Box$\\
\end{proof}

It follows from our proof that the functions $v_t(\cdot)$ calculated in (\ref{DP-generic}) are the optimal values
of tail subproblems formulated for a fixed $x_t=x$ as follows:
\begin{align*}
v_t(x) &= \min_{\pi_t,\dots,\pi_T}
 \rho_t^\varPi\Big(c(x_{t},\pi_{t}(x_t),x_{t+1}) + \rho_{t+1}^\varPi\big(c(x_{t+1},\pi_{t+1}(x_{t+1}),x_{t+2}) + \cdots \\
 &{\quad} +  \rho_T^\varPi\big(c(x_T,\pi_T(x_T),x_{T+1})+v_{T+1}(x_{T+1})\big) \cdots \big) \Big).
\end{align*}
We call them \emph{value functions}, as in risk-neutral dynamic programming. It is obvious that we may have
non-stationary costs, transition kernels, and risk transition mappings in this case. Also, the assumption that the process is transient is not needed.

 Equations (\ref{DP-generic})--(\ref{DP-policy}) provide a computational recipe for solving finite horizon problems.

\section{Evaluation of Stationary Markov Policies in Infinite Horizon Problems}
 \label{s:infinite-policy}

Consider a  stationary policy $\varPi=\{\pi,\pi,\dots\}$ and define the cost until absorption as follows:
\begin{equation}
\label{Jinfty}
J_{\infty}(\varPi,x_1) = \lim_{T\to\infty}J_T(\varPi,x_1),
\end{equation}
where each $J_T(\varPi,x_1)$ is defined by the formula
\begin{equation}
\label{JT1}
\begin{aligned}
J_T(\varPi,x_1)  &=  \rho_1^\varPi\bigg(c(x_1,\pi(x_1),x_2)+ \rho_2^\varPi\Big( c(x_2,\pi(x_2),x_3)+ \cdots  + \rho_{T}^\varPi\big(c(x_{T},\pi(x_T),x_{T+1})\big) \cdots \Big)\bigg)\\
& = \rho_{1,T+1}^\varPi\big(0,c(x_1,\pi(x_1),x_2), c(x_2,\pi(x_2),x_3),\dots,c(x_{T},\pi(x_T),x_{T+1})\big),
\end{aligned}
\end{equation}
with  Markov conditional risk measures
$\rho_t^\varPi$, $t=1,\dots,T$, sharing the same risk transition mapping $\sigma(\cdot,\cdot,\cdot)$.
We assume all conditions of Theorem \ref{t:finiteDP}.

The first question to answer is when this cost is finite. This question is nontrivial, because even for uniformly bounded costs
$Z_t = c(x_{t-1},\pi(x_{t-1}),x_{t})$, $t=2,3,\dots$, and for a transient finite-state Markov chain, the limit
in \eqref{Jinfty} may be infinite, as the following example demonstrates.

\begin{example}
\label{e:counterexample}
{\rm
Consider a transient Markov chain with two states and with the following transition probabilities:
$Q_{11}=Q_{12} = \frac{1}{2}$, $Q_{22}=1$. Only one control is possible in each state, the cost of each
transition from state 1 is equal to 1, and the cost of the transition from 2 to 2 is 0. Clearly, the time until absorption is a geometric random variable with parameter $\frac{1}{2}$. Let $x_1=1$. If the limit \eqref{Jinfty} is finite, then
(skipping the dependence on $\varPi$) we have
\[
J_{\infty}(1) = \lim_{T\to\infty}J_T(1) = \lim_{T\to\infty} \rho_1\big(1 + J_{T-1}(x_2)\big) = \rho_1\big(1 + J_{\infty}(x_2)\big).
\]
In the last equation we used the continuity of $\rho_1(\cdot)$. Clearly, $J_{\infty}(2)=0$.

Suppose that we are using the Average Value at Risk  from Example \ref{e:rtm-cvar}, with $0 < \alpha \le \frac{1}{2}$,
to define $\rho_1(\cdot)$. Using standard identities for
the Average Value at Risk (see, e.g., \cite[Thm. 6.2]{SDR}), we obtain
\begin{equation}
\label{avar-comp}
\begin{aligned}
J_{\infty}(1) &= \inf_{\eta\in\R} \Big\{ \eta + \frac{1}{\alpha}\E\big[\big(1+ J_{\infty}(x_2)-\eta\big)_+\big] \Big\} \\
&= 1 + \inf_{\eta\in\R} \Big\{ \eta + \frac{1}{\alpha}\E\big[\big(J_{\infty}(x_2)-\eta\big)_+\big] \Big\}
= 1 + \frac{1}{\alpha}\int_{1-\alpha}^1 F^{-1}(\beta)\;d\beta,
\end{aligned}
\end{equation}
where $F(\cdot)$ is the distribution function of $J_{\infty}(x_2)$. As all $\beta$-quantiles of $J_{\infty}(x_2)$
for $\beta \ge \frac{1}{2}$ are equal to $J_{\infty}(1)$, the last equation yields
$J_{\infty}(1) = 1+ J_{\infty}(1)$,
a contradiction. It follows that a composition of average values at risk has no finite limit, if $0< \alpha \le \frac{1}{2}$.

On the other hand, if $\frac{1}{2}< \alpha <1$, then
\[
F^{-1}(\beta) = \begin{cases}
J_{\infty}(2)=0 & \text{if} \ 1-\alpha \le \beta < \frac{1}{2},\\
J_{\infty}(1)   & \text{if} \ \frac{1}{2} \le \beta \le 1.
\end{cases}
\]
Formula \eqref{avar-comp} then yields
$J_{\infty}(1) = 1 + \frac{1}{2\alpha}J_{\infty}(1)$.
This equation has a solution $J_{\infty}(1) = {2\alpha}/({2\alpha -1})$.

If we use the mean-semideviation model of Example \ref{e:rtm-semi}, we obtain
\begin{align*}
J_{\infty}(1) &= \E\big[1+ J_{\infty}(x_2)\big] +  \kappa\E\bigg[\Big(1+ J_{\infty}(x_2) - \E\big[1+ J_{\infty}(x_2)\big]\Big)_+ \bigg] \\
&=  1 + \frac{1}{2} J_{\infty}(1)  + \kappa \frac{1}{2} \bigg( J_{\infty}(1) -  \frac{1}{2} J_{\infty}(1) \bigg)
= 1 + \frac{2+\kappa}{4} J_{\infty}(1).
\end{align*}
Thus $J_{\infty}(1) = 4/ (2-\kappa)$, which is finite for all $\kappa\in [0,1]$, that is, for all values of $\kappa$ for which the model
defines a coherent measure of risk.
}
\end{example}

It follows that deeper properties of the measures of risk and their interplay with the transition kernel need to be investigated to
answer the question about finiteness of the dynamic measure of risk in this case.


Recall that with every risk transition mapping $\sigma(\cdot,\cdot,\cdot)$, every controlled kernel $Q$, and every decision rule
$\pi$, a multikernel $\Mf^{\pi}$ is associated, as defined in \eqref{multikernel}.
Similarly to the expected value case, it is convenient
to consider the effective state space $\widetilde{\X}=\X\setdif \{x_{\rm A}\}$, and the \emph{effective substochastic multikernel}
$\widetilde{\Mf}^{\pi}$ whose arguments are restricted to $\widetilde{\X}$
and whose values are convex sets of nonnegative measures on $\widetilde{\X}$ defined by the identity:
$\widetilde{\Mf}^{\pi}(B|x) \equiv \Mf^{\pi}(B|x)$, for all $B\in \B(\widetilde{\X})$ and $x\in \widetilde{\X}$.

A function $v\in\V$ with $v(x_{\rm A})=0$ can be identified with a function $\tilde{v}$ on $\widetilde{\X}$; we shall write $\|\tilde{v}\|$
for the norm $\|v\|$ in $\V$; we shall also write $\tilde{v}\in \V$ to indicate that the corresponding extension $v$ is an element of $\V$.
Recall that the norm $\|\cdot\|_w$ associated with a weight function $w$ is defined as follows:
\[
\|v\|_w = \sup_{x\in \widetilde{\X}}\; \frac{v(x)}{w(x)}.
\]
The corresponding operator norm $\|A\|_w$ of a substochastic kernel $A$ is defined as follows:
\[
\|A\|_w = \sup_{x\in \widetilde{\X}} \frac{1}{w(x)} \int_{\widetilde{\X}} w(y)\;A(dy|x).
\]
\begin{definition}
\label{d:risk-transient}
We call the Markov model with a risk transition mapping $\sigma(\cdot,\cdot,\cdot)$  and with a stationary Markov policy $\{\pi,\pi,\dots\}$ \emph{risk-transient}
if a weight function $w:\widetilde{\X}\to[1,\infty)$, $w\in \V$, and a constant $K$ exist such that
\begin{equation}
\label{Pliska-risk}
\big\| M \big\|_w \le K \quad
\text{for all} \quad
M \lessdot  \sum_{j=1}^T \big(\widetilde{\Mf}^{\pi_{}}\big)^j \quad \text{and all}\quad  T\ge 0.
\end{equation}
If the estimate \eqref{Pliska-risk} is uniform for all Markov policies, the model is called \emph{uniformly risk-transient}.
\end{definition}

In the special case of a risk-neutral model, owing to the equa\-tion~\eqref{neutral-kernel},
Definition \ref{d:risk-transient} reduces to the condition that
\begin{equation}
\label{Pliska}
\Big\| \sum_{j=1}^\infty \big(\widetilde{Q}^{\pi}\big)^j \Big\|_w \le K,
\end{equation}
which has been analyzed by Pliska \cite{Pliska} and \cite[sec. 9.6]{HLL2}.

\begin{example}
\label{multikernels}
{\rm
Consider the simple transient chain of Example \ref{e:counterexample} with the Average Value at Risk  from Examp\-les~\ref{e:rtm-cvar}
and \ref{e:rtm-cvar2},
where $0 < \alpha \le 1$. From \eqref{avar-set} we obtain
\[
\A(i,m) =\left \{(\mu_1,\mu_2): 0 \le \mu_j \le \frac{m_j}{\alpha},\ j=1,2;\ \mu_1+ \mu_2= 1  \right\}.
\]
As only one control is possible, formula \eqref{multikernel} simplifies to
\[
\Mf(i) = \Big\{ (\mu_1,\mu_2) : 0 \le \mu_j \le \frac{Q_{ij}}{\alpha} ,\ j=1,2;\ \mu_1+\mu_2=1\Big\},\quad i=1,2.
\]
The effective state space is just $\widetilde{\X}=\{1\}$, and we conclude that the effective multikernel is the interval
\[
\widetilde{\Mf} = \Big[0,\min\Big(1,\frac{1}{2\alpha}\Big)\Big].
\]
For $0 < \alpha \le \frac{1}{2}$ we can select $\widetilde{M}=1 \in \widetilde{\Mf}$
to show that $1\in \big(\widetilde{\Mf}\big)^j$ for all $j$, and thus condition \eqref{Pliska-risk} is not satisfied. On the other hand,
if $\frac{1}{2} < \alpha \le 1$, then for every $\widetilde{M} \in  \widetilde{\Mf}$ we have $0 \le \widetilde{M} < 1$, and condition
\eqref{Pliska-risk} is satisfied.

Consider now the mean-semideviation model of Examples \ref{e:rtm-semi} and \ref{e:rtm-semi2}. From \eqref{semi-set} we obtain
\begin{align*}
\A(i,m) &= \Big \{(\mu_1,\mu_2) :   {\mu_j} =  m_j \left(1 +  h_j - (h_1m_1+ h_2m_2)\right),\
0 \le h_j \leq \kappa,\ j=1,2\Big\},\\
\Mf(i) &= \Big \{(\mu_1,\mu_2) :  {\mu_j} = Q_{ij}\left( 1 +  h_j - (h_1Q_{i1}+ h_2Q_{i2})\right),\
0 \le h_j \leq \kappa,\ j=1,2\Big\},\quad i=1,2.
\end{align*}
Calculating the lowest and the largest possible values of $\mu_1$ we conclude that
\[
\widetilde{\Mf} = \Big[\frac{1}{2}\Big(1-\frac{\kappa}{2}\Big), \frac{1}{2}\Big(1+\frac{\kappa}{2}\Big)  \Big].
\]
For every $\kappa \in [0,1]$, Definition \ref{d:risk-transient} is satisfied.
}
\end{example}

%

We can now provide sufficient conditions for the finiteness of the limit \eqref{Jinfty}.

\begin{theorem}
\label{t:limitTk}
Suppose a stationary policy $\varPi=\{\pi,\pi,\dots\}$ is applied to a the controlled Markov model with a  risk transition mapping
$\sigma(\cdot,\cdot,\cdot)$.
If the model satisfies conditions {\rm (G0)--(G3)}, is risk-transient for the policy $\varPi$, and the cost function $c(\cdot,\cdot,\cdot)$ is $w$-bounded,
then the limit
\begin{equation}
\label{JinftyV}
J_{\infty}(\varPi,\cdot) = \lim_{T\to\infty}J_T(\varPi,\cdot),
\end{equation}
exists in $\V$ and is $w$-bounded. If the model is additionally uniformly risk-transient, then
$\big\| J_{\infty}(\varPi,\cdot)\big\|_w$ is uniformly bounded for all $\varPi$ and the
limit function $(\pi,x)\mapsto J_\infty(\varPi,x)$ is lower semicontinuous.
\end{theorem}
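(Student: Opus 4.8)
The plan is to work on the effective state space $\widetilde\X$ with the effective substochastic multikernel $\widetilde\Mf^\pi$, and to analyze the telescoping of the backward recursion behind \eqref{JT1}. First I would set $v_0\equiv0$ and
\[
v_k(x)=\sigma\big(c(x,\pi(x),\cdot)+v_{k-1}(\cdot),\,x,\,Q(x,\pi(x))\big),\qquad k\ge1,
\]
and note that, by the Markov structure of the one-step measures and the argument of Theorem \ref{t:finiteDP} specialized to the fixed policy $\pi$ (with no minimization), $J_T(\varPi,x)=v_T(x)$. Using the dual representation \eqref{representation} this reads $v_k(x)=\max_{M\in\Mf^\pi(x)}\big[\langle c(x,\pi(x),\cdot),M\rangle+\langle v_{k-1},M\rangle\big]$, where, because $v_{k-1}(x_{\rm A})=0$, the value term only sees $\widetilde\X$. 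Writing $\delta_k=v_{k+1}-v_k$ and subtracting the two maxima at a common maximizer, monotonicity and nonnegativity of the measures give $|\delta_k(x)|\le\max_{M\in\widetilde\Mf^\pi(x)}\langle|\delta_{k-1}|,M\rangle$. Iterating this, and at each stage realizing the pointwise inner maximizer by a measurable selector (Kuratowski--Ryll-Nardzewski, as in Proposition \ref{p:meas}) so that the nested integrals collapse into a composition, I would obtain
\[
|\delta_k(x)|\le\max_{M\lessdot(\widetilde\Mf^\pi)^k}\langle|\delta_0|,M\rangle,\qquad k\ge1.
\]

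For the first increment, $w$-boundedness of $c$ together with $\|\widetilde M\|_w\le K$ for $\widetilde M\lessdot\widetilde\Mf^\pi$ (the $T=1$ case of Definition \ref{d:risk-transient}) gives $|\delta_0(x)|\le C_1w(x)$ for a constant $C_1$ depending only on $C$ and $K$; hence $|\delta_k(x)|\le C_1\max_{M\lessdot(\widetilde\Mf^\pi)^k}\langle w,M\rangle$. Since independent maxima over the individual powers combine into one maximum over a selector of their Minkowski sum,
\[
\sum_{k=1}^{T}\max_{M\lessdot(\widetilde\Mf^\pi)^k}\langle w,M\rangle=\max_{M\lessdot\sum_{k=1}^{T}(\widetilde\Mf^\pi)^k}\langle w,M\rangle\le\Big\|\sum_{k=1}^{T}(\widetilde\Mf^\pi)^k\Big\|_w\,w(x)\le Kw(x),
\]
by risk-transience. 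Thus $\sum_{k=0}^{T-1}|\delta_k(x)|\le C_1(1+K)w(x)$ uniformly in $T$, so the telescoping partial sums $v_T(x)=\sum_{k=0}^{T-1}\delta_k(x)$ converge pointwise to $J_\infty(\varPi,x)$ with $|J_\infty(\varPi,x)|\le C_1(1+K)w(x)$. This proves finiteness and $w$-boundedness, and since the common majorant $C_1(1+K)w$ lies in $\V=\LL_p$, dominated convergence upgrades pointwise convergence to convergence in $\V$. In the uniformly risk-transient case the constants $K$ and $C_1$ do not depend on the policy, so $\|J_\infty(\varPi,\cdot)\|_w$ is bounded uniformly in $\varPi$.

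There remains the lower semicontinuity of $(\pi,x)\mapsto J_\infty(\varPi,x)$. I would first establish, by induction on the horizon, that each finite-horizon value $(\pi,x)\mapsto J_T(\varPi,x)=v_T(x)$ is lower semicontinuous: the base case is immediate, and the inductive step is Proposition \ref{p:lsc} with the lower semicontinuous, $w$-bounded function $v_T$ playing the role of $v$. I would then pass to the limit through the tail bound
\[
\big|J_\infty(\varPi,x)-v_T(x)\big|\le C_1\max_{M\lessdot\sum_{k\ge T}(\widetilde\Mf^\pi)^k}\langle w,M\rangle,
\]
combined with $\liminf J_\infty\ge\liminf v_T-\limsup|J_\infty-v_T|$ along any sequence $(\pi_n,x_n)\to(\pi_0,x_0)$. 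The hard part is exactly the control of this remainder: the pointwise bound $\sum_{k\ge1}\langle w,\cdot\rangle\le Kw$ does not by itself make the tails small uniformly (the suprema and the sums interchange the wrong way), so a naive estimate fails. The crux is to use \emph{uniform} risk-transience --- the uniform bound on future $w$-weighted occupation measured from \emph{every} state and under \emph{every} policy --- to force the remainder to be negligible along convergent sequences; lower semicontinuity of the limit then descends from that of the $v_T$.
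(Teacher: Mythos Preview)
Your convergence argument via telescoping increments $\delta_k = v_{k+1}-v_k$ is correct and takes a genuinely different route from the paper. The paper does not track increments; instead it shows directly that $\{J_T(\varPi,\cdot)\}_T$ is $\|\cdot\|_w$-Cauchy. Using subadditivity it bounds $J_{T_2-1}-J_{T_1-1}$ above by a tail term $\rho^\varPi_{1,T_2}(0,\dots,0,Z_{T_1+1},\dots,Z_{T_2})$; using convexity together with a sign-flip of the tail costs it obtains the matching lower bound. Replacing $|Z_t|$ by $C(\bar w(x_{t-1})+\bar w(x_t))$ and unfolding the dual representation then yields the factored estimate
\[
\big\|J_{T_2-1}(\varPi,\cdot)-J_{T_1-1}(\varPi,\cdot)\big\|_w \;\le\; 2CK\,\big\|\widetilde M_1\cdots\widetilde M_{T_1-1}\big\|_w,\qquad \widetilde M_j\lessdot\widetilde\Mf^\pi.
\]
Your approach is more elementary --- it dispenses with the subadditivity/convexity/sign-flip machinery --- and it does deliver pointwise convergence, the $w$-bound, and (via dominated convergence) convergence in $\V=\LL_p$, which is all the first sentence of the theorem demands.

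The genuine gap is exactly where you flag it: lower semicontinuity of $(\pi,x)\mapsto J_\infty(\varPi,x)$. Your tail bound $|J_\infty-v_T|\le C_1\sum_{k\ge T}\sup_{M\lessdot(\widetilde\Mf^\pi)^k}(Mw)$ is only pointwise, and risk-transience does not force the $w$-normalized tail to vanish uniformly in $x$ or along convergent sequences $(\pi_n,x_n)$; you say as much and then stop. The paper's factored Cauchy bound is precisely what closes this: under uniform risk-transience the right-hand side depends on $\pi$ only through a selector of $(\widetilde\Mf^\pi)^{T_1-1}$, and the paper argues that the $\|\cdot\|_w$-norm of any such selector tends to zero as $T_1\to\infty$, uniformly in $\varPi$. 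That yields $\sup_{x,\varPi}|J_T(\varPi,x)-J_\infty(\varPi,x)|/w(x)\to 0$, from which lsc of the limit descends from lsc of each $J_T$. Your telescoping estimates do not produce this product-times-bounded-tail separation, so to finish you would either need to import the paper's Cauchy argument or find another device that upgrades your pointwise tail control to a $w$-uniform one. A smaller point: your inductive appeal to Proposition~\ref{p:lsc} gives lsc of $(x,u)\mapsto\sigma(c(x,u,\cdot)+v,\,x,Q(x,u))$ for a \emph{fixed} $v\in\V$, but $v_{T-1}$ itself depends on $\pi$; the joint lsc in $(\pi,x)$ needs a little more than a straight citation.
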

\begin{proof}
By Conditions (A1)--(A4), each conditional risk measure $\rho_{1,T}(\cdot)$ is convex and positively homogeneous, and thus subadditive.
For any $1 < T_1 < T_2$ we obtain the following estimate of \eqref{JT1}:
\begin{equation}
\label{subaddm}
\begin{aligned}
J_{T_2-1}(\varPi,x_1) &= {\rho_{1,T_2}^\varPi(0,Z_{2} , \dots,Z_{T_2})} \\
&\le
 {\rho_{1,T_2}^\varPi(0,Z_{2} , \dots,Z_{T_1},0,\dots,0)}+ \rho_{1,T_2}^\varPi(0 ,\dots ,0,Z_{T_1+1},\dots,Z_{T_2})\\
 &=
 {\rho_{1,T_1}^\varPi(0,Z_{2} , \dots,Z_{T_1})}+ \rho_{1,T_2}^\varPi(0 ,\dots ,0,Z_{T_1+1},\dots,Z_{T_2})\\
 &=
 J_{T_1-1}(\varPi,x_1)+ \rho_{1,T_2}^\varPi(0 ,\dots ,0,Z_{T_1+1},\dots,Z_{T_2}).
\end{aligned}
\end{equation}
As the cost function is $w$-bounded, $Z_{j+1}\le C \big(\bar{w}(x_j)+\bar{w}(x_{j+1})\big)$,
where $\bar{w}(x) = w(x)$ if $x\in \widetilde{\X}$, and $\bar{w}(x_{\text{A}}) = 0$.
Owing to the monotonicity and positive homogeneity of the conditional risk mappings,
\begin{multline}
\label{two-w}
\rho_{1,T_2}^\varPi(0 ,\dots ,0,Z_{T_1+1},\dots,Z_{T_2}) \le 2 C  \rho_{1,T_2}^\varPi(0 ,\dots ,0,\bar{w}(x_{T_1+1}),\dots,\bar{w}(x_{T_2}))  \\
 = 2 C \rho_1^\varPi\Big(\rho_2^\varPi\Big(\cdots \rho_{T_1}^\varPi\Big(\bar{w}(x_{T_1+1}) + \rho_{T_1+1}^\varPi\big(\bar{w}(x_{T_1+1})
 + \dots + \rho_{T_2-1}^\varPi\big(\bar{w}(x_{T_2})\big)\cdots\big)\Big)\cdots\Big)\Big).
\end{multline}
If $x_{T_2-1}\ne x_{\rm A}$, applying \eqref{rhot-M} to the innermost expression, we obtain
\begin{equation}
\label{rho-T-1}
\rho_{T_2-1}^\varPi\big(\bar{w}(x_{T_2})\big) = \max_{m\in {\widetilde{\Mf}}^{\pi_{}}(x_{T_2-1})} \int\limits_{\widetilde{\X}}  {w}(y)\; m(dy).
\end{equation}
It is a function of $x_{T_2-1}$, which we denote as $v_{T_2-1}(x_{T_2-1})$. Restricting the domains of the functions
to $\widetilde{\X}$,  we may write the relation:
\begin{equation}
\label{vback}
v_{T_2-1} = \widetilde{M}_{T_2-1}{w},\quad \widetilde{M}_{T_2-1} \lessdot \widetilde{\Mf}^{\pi_{}},
\end{equation}
where the selector $\widetilde{M}_{T_2-1}$ has values $\widetilde{M}_{T_2-1}(x_{T_2-1})$ equal to the maximizers in \eqref{rho-T-1}.
The maximizers exist owing to the weak$^*$ compactness of the values of the multikernel $\widetilde{\Mf}^{\pi_{}}$.
One step earlier, we obtain
\begin{equation}
\label{rho-T-2}
\begin{aligned}
\rho_{T_2-2}^\varPi\Big( \bar{w}(x_{T_2-1}) + \rho_{T_2-1}^\varPi\big(\bar{w}(x_{T_2})\big)\Big) &=
\rho_{T_2-2}^\varPi\big( \bar{w}(x_{T_2-1}) + v_{T_2-1}(x_{T_2-1})\big) \\
&= \max_{m\in \widetilde{\Mf}^{\pi_{}}(x_{T_2-2})}
\int\limits_{\X}  \big[ {w}(y) + v_{T_2-1}(y)\big]\; m(dy).
\end{aligned}
\end{equation}
Again, the maximizers $\widetilde{M}_{T_2-2}(x_{T_2-2})$ in \eqref{rho-T-2} exist, and they can be chosen in a measurable way. Denoting the
optimal value by $v_{T_2-2}(x_{T_2-2})$, we obtain a relation similar to \eqref{vback}:
\begin{equation}
\label{vback2}
v_{T_2-2} = \widetilde{M}_{T_2-2}\big( {w}+ v_{T_2-1}\big)  = \Big( \widetilde{M}_{T_2-2} + \widetilde{M}_{T_2-2} \widetilde{M}_{T_2-1}\big){w}, \quad \widetilde{M}_{T_2-2} \lessdot \widetilde{\Mf}^{\pi_{}},\quad \widetilde{M}_{T_2-1} \lessdot \widetilde{\Mf}^{\pi_{}}.
\end{equation}
Proceeding in this way, we can calculate the function
\[
v_{T_1}(x_{T_1}) =  \rho_{T_1}^\varPi\Big(\bar{w}(x_{T_1+1}) + \rho_{T_1+1}^\varPi\big(\bar{w}(x_{T_1+1})
+ \rho_{T_1+2}^\varPi\big( \bar{w}(x_{T_1+1})
 + \dots + \rho_{T_2-1}^\varPi\big(\bar{w}(x_{T_2})\big)\cdots\big)\big)\Big)
\]
on $\widetilde{\X}$ as follows:
\[
v_{T_1} = \big( \widetilde{M}_{T_1} + \widetilde{M}_{T_1}\widetilde{M}_{T_1+1} + \dots + \widetilde{M}_{T_1}\widetilde{M}_{T_1+1}\cdots\widetilde{M}_{T_2-1}\big){w},
\]
with $\widetilde{M}_j\lessdot \widetilde{\Mf}^{\pi_{}}$, $j=T_1,\dots,T_2-1$. In the formula above, we restrict the domains of the
functions to $\widetilde{X}$; at $x_{\rm{A}}$ their values are zero. Finally, defining
\begin{align*}
v_1(x_1) &= \rho_1^\varPi\Big(\rho_2^\varPi\Big(\cdots \rho_{T_1}^\varPi\Big(\bar{w}(x_{T_1+1}) + \rho_{T_1+1}^\varPi\big(\bar{w}(x_{T_1+1})
 + \dots + \rho_{T_2-1}^\varPi\big(\bar{w}(x_{T_2})\big)\cdots\big)\Big)\cdots\Big)\Big),
\end{align*}
we obtain the representation
\begin{equation}
\label{v1rep}
v_1 = \widetilde{M}_{1}\widetilde{M}_{2}\cdots\widetilde{M}_{T_1-1}\big( \widetilde{M}_{T_1} + \widetilde{M}_{T_1}\widetilde{M}_{T_1+1} + \dots + \widetilde{M}_{T_1}\widetilde{M}_{T_1+1}\dots\widetilde{M}_{T_2-1}\big){w},
\end{equation}
with $\widetilde{M}_j\lessdot \widetilde{\Mf}^{\pi_{}}$, $j=1,\dots,T_2-1$.
This combined with \eqref{subaddm}--\eqref{two-w} yields an estimate:
\begin{equation}
\label{right-estimate}
J_{T_2-1}(\varPi,\cdot) - J_{T_1-1}(\varPi,\cdot) \le
  2 C \widetilde{M}_{1}\widetilde{M}_{2}\cdots\widetilde{M}_{T_1-1}\big( \widetilde{M}_{T_1} + \widetilde{M}_{T_1}\widetilde{M}_{T_1+1} + \dots + \widetilde{M}_{T_1}\widetilde{M}_{T_1+1}\cdots\widetilde{M}_{T_2-1}\big){w}.
 \end{equation}

Consider now the sequence of costs $Z_1,\dots,Z_{T_1},-Z_{T_1+1},\dots,-Z_{T_2}$, in which we flip the sign of
the costs $Z_{t+1}=c(x_{t},u_t,x_{t+1})$ for $t \ge T_1$. From subadditivity, similarly to \eqref{subaddm}, we obtain
\begin{equation}
\label{flipped}
{\rho_{1,T_2}^\varPi(0,Z_{2} , \dots,Z_{T_1},-Z_{T_1+1},\dots, -Z_{T_2})}  \le \rho_{1,T_1}^\varPi(0,Z_{2} ,\dots , Z_{T_1})
+\rho_{1,T_2}^\varPi(0 ,\dots ,0,-Z_{T_1+1},\dots,-Z_{T_2}).
\end{equation}
By convexity of $\rho_{1,T_2}(\cdot)$,
\[
2 {\rho_{1,T_1}^\varPi(0,Z_{2} ,\dots , Z_{T_1})} \le  {\rho_{1,T_2}^\varPi(0,Z_{2} , \dots,Z_{T_1},Z_{T_1+1},\dots,Z_{T_2})} +
{\rho_{1,T_2}^\varPi(0,Z_{2} , \dots,Z_{T_1},-Z_{T_1+1},\dots,-Z_{T_2})}.
\]
Substituting the estimate \eqref{flipped}, we deduce that
\[
\rho_{1,T_2}^\varPi(0,Z_{2}, \dots,Z_{T_2}) \ge  \rho_{1,T_1}^\varPi(0,Z_{2} ,\dots , Z_{T_1})
-  \rho_{1,T_2}^\varPi(0 ,\dots ,0,-Z_{T_1+1},\dots,-Z_{T_2}).
\]

As $|Z_{t+1}|$ are bounded by $C\big(\bar{w}(x_t)+\bar{w}(x_{t+1})\big)$,  the estimate
\eqref{two-w} applies to the last element on the right hand side. We obtain
\begin{multline*}
J_{T_2-1}(\varPi,x_1) - J_{T_1-1}(\varPi,x_1)
= {\rho_{1,T_2}^\varPi(0,Z_{2}, \dots,Z_{T_2})} -  {\rho_{1,T_1}^\varPi(0,Z_{2} ,\dots , Z_{T_1})} \\
 \ge
-  2 C \rho_1^\varPi\Big(\rho_2^\varPi\Big(\cdots \rho_{T_1}^\varPi\Big(\bar{w}(x_{T_1+1}) + \rho_{T_1+1}^\varPi\big(\bar{w}(x_{T_1+1})
 + \dots + \rho_{T_2-1}^\varPi\big(\bar{w}(x_{T_2})\big)\cdots\big)\Big)\cdots\Big)\Big)
= - 2C v_1(x_1),
\end{multline*}
where $v_1(\cdot)$ has representation \eqref{v1rep}.
This combined with (\ref{right-estimate}) yields
\[
\big| J_{T_2-1}(\varPi,x_1) - J_{T_1-1}(\varPi,x_1)\big|
\le  2C  | v_1(x_1) |,\quad x_1\in \widetilde{\X}.
\]
This pointwise estimate implies the relations between the norms:
\[
\big\| J_{T_2-1}(\varPi,\cdot) - J_{T_1-1}(\varPi,\cdot)\big\|_w \le 2C  \big\| v_1 \big\|_w.
\]
In view of representation \eqref{v1rep}, we obtain the estimate
\[
\big\| J_{T_2-1}(\varPi,\cdot) - J_{T_1-1}(\varPi,\cdot)\big\|_w
\le 2C \left\| \widetilde{M}_{1}\widetilde{M}_{2}\cdots\widetilde{M}_{T_1-1}\big( \widetilde{M}_{T_1} + \widetilde{M}_{T_1}\widetilde{M}_{T_1+1} + \dots + \widetilde{M}_{T_1}\widetilde{M}_{T_1+1}\cdots\widetilde{M}_{T_2-1}\big){w} \right\|_w.
\]
By Definition \ref{d:risk-transient},
$\big\| \widetilde{M}_{T_1} + \widetilde{M}_{T_1}\widetilde{M}_{T_1+1} + \dots + \widetilde{M}_{T_1}\widetilde{M}_{T_1+1}\cdots\widetilde{M}_{T_2-1}\big\|_w \le K$, and thus
\begin{equation}
\label{J-Cauchy}
\big\| J_{T_2-1}(\varPi,\cdot) - J_{T_1-1}(\varPi,\cdot)\big\|_w
\le 2CK \big\| \widetilde{M}_{1}\widetilde{M}_{2}\cdots\widetilde{M}_{T_1-1}\big\|_w.
\end{equation}
Observe that $\widetilde{M}_{1}\widetilde{M}_{2}\cdots\widetilde{M}_{T_1-1} \lessdot \big(\widetilde{\Mf}^{\pi_{}}\big)^{T_1-1}$.
It follows from Definition \ref{d:risk-transient} that for any sequence of selectors $A_j\lessdot \big(\widetilde{\Mf}^{\pi_{}}\big)^j$
we have
$\big\| \sum_{j=1}^\infty A_j \big\|_w \le K$.
Therefore, $\big\|A_j\big\|_w \to 0$, as $j\to\infty$. Consequently,
the right hand side of \eqref{J-Cauchy} converges to 0, when $T_1,T_2\to \infty$, $T_1 < T_2$. Hence,
the sequence of functions $J_T(\varPi,\cdot)$, $T=1,2,\dots$  is  convergent to some $w$-bounded limit $J_{\infty}(\varPi,\cdot)\in \V$.
The convergence is $w$-uniform, that is,
\[
\lim_{T\to \infty} \sup_{x\in \widetilde{\X}} \frac{\big | J_T(\varPi,x) - J_\infty(\varPi,x) \big |} {w(x)}= 0.
\]
If the model is uniformly risk-transient, then the estimate \eqref{J-Cauchy} is the same for all Markov policies
$\varPi$, and thus $\big\| J_{\infty}(\varPi,\cdot)\big\|_w$ is uniformly bounded. Moreover,
\[
\lim_{T\to \infty} \sup_{{x\in \widetilde{\X}}\atop{\varPi\in \Pi^{\text{DM}}}} \frac{\big | J_T(\varPi,x) - J_\infty(\varPi,x) \big |} {w(x)}= 0,
\]
where $\Pi^{\text{DM}}$ is the set of all stationary deterministic Markov policies.
As each of the functions $(\pi,x)\mapsto J_T(\varPi,x)$ is lower semicontinuous, so is the limit function $(\pi,x)\mapsto J_\infty(\varPi,x)$.
\hfill$\Box$\\
\end{proof}
\begin{remark}
\label{r:valuePi}
It is clear from the proof of Theorem \ref{t:limitTk}, that
\begin{equation}
\label{valuePi}
J_{\infty}(\varPi,x_1) = \lim_{T\to\infty} {\rho_{1,T}^\varPi\big(0,Z_{2} , \dots,Z_{T} + f(x_T)\big)},
\end{equation}
for any $w$-bounded measurable function $f:\X\to \R$,  because $c(x_{T-1},u_t,x_T)+f(x_T)$ is still $w$-bounded.
\end{remark}

This analysis allows us to derive policy evaluation equations for the infinite horizon problem,
in the case of a fixed Markov policy.

\begin{theorem}
\label{t:DPfixed}
Suppose a  controlled Markov model with a risk transition mapping
$\sigma(\cdot,\cdot,\cdot)$ is risk-transient for the stationary Markov policy $\varPi=\{\pi,\pi,\dots\}$,
with some weight function $w(\cdot)$. If condition {\rm (G3)} is satisfied,
then a $w$-bounded function $v\in\V$ satisfies the equations
\begin{align}
v(x) &= \sigma\big(c(x,\pi(x),\cdot) + v(\cdot),x, Q(x,\pi(x))\big), \quad x\in \widetilde{\X}, \label{DPE}\\
v(x_{\rm A}) &=0, \label{DPE2}
\end{align}
if and only if $v(x) = J_{\infty}(\varPi,x)$ for all $x\in \X$.
\end{theorem}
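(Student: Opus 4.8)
The plan is to phrase both implications in terms of the one-step \emph{policy evaluation operator}
\[
(\Df^{\pi} v)(x) = \sigma\big(c(x,\pi(x),\cdot) + v(\cdot),x,Q(x,\pi(x))\big),\quad x\in\widetilde{\X},
\]
acting on $w$-bounded functions that vanish at $x_{\rm A}$; equations \eqref{DPE}--\eqref{DPE2} say precisely that $v$ is a fixed point of $\Df^{\pi}$. First I would record the finite-horizon recursion: using the Markov property \eqref{Markov-risk}, the nested structure \eqref{JT1}, and stationarity of $\sigma$, $Q$, and $\pi$, one obtains $J_T(\varPi,\cdot) = \Df^{\pi} J_{T-1}(\varPi,\cdot)$ for $T\ge 1$, with $J_0(\varPi,\cdot)\equiv 0$, i.e. $J_T(\varPi,\cdot)=(\Df^{\pi})^T 0$. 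The boundary value $J_\infty(\varPi,x_{\rm A})=0$ is immediate, since no cost is ever incurred from $x_{\rm A}$, so $J_T(\varPi,x_{\rm A})=0$ for every $T$. Theorem~\ref{t:limitTk} guarantees that $J_\infty(\varPi,\cdot)\in\V$ is $w$-bounded and that $J_T(\varPi,\cdot)\to J_\infty(\varPi,\cdot)$ in the $\|\cdot\|_w$ norm, so all objects are well defined.

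For the implication ``$v=J_\infty \Rightarrow$ \eqref{DPE}'' I would first show that $\Df^{\pi}$ is Lipschitz in the $\|\cdot\|_w$ norm. Writing $\sigma$ through its dual representation \eqref{representation}, using $\max_{\mu}\langle\varphi_1,\mu\rangle-\max_{\mu}\langle\varphi_2,\mu\rangle\le\max_{\mu}\langle\varphi_1-\varphi_2,\mu\rangle$, and recalling that every $\mu\in\A(x,Q(x,\pi(x)))=\widetilde{\Mf}^{\pi}(x)$ is a (sub)probability measure, I get for $w$-bounded $\varphi_1,\varphi_2$ vanishing at $x_{\rm A}$
\[
\big|(\Df^{\pi}\varphi_1)(x) - (\Df^{\pi}\varphi_2)(x)\big| \le \max_{\mu\in\widetilde{\Mf}^{\pi}(x)}\int_{\widetilde{\X}} |\varphi_1(y)-\varphi_2(y)|\,\mu(dy) \le \|\varphi_1-\varphi_2\|_w \max_{\mu\in\widetilde{\Mf}^{\pi}(x)}\int_{\widetilde{\X}} w(y)\,\mu(dy).
\]
The single-step case ($T=1$) of the risk-transience bound \eqref{Pliska-risk} gives $\max_{\mu\in\widetilde{\Mf}^{\pi}(x)}\int_{\widetilde{\X}} w\,d\mu\le K\,w(x)$, hence $\|\Df^{\pi}\varphi_1-\Df^{\pi}\varphi_2\|_w\le K\|\varphi_1-\varphi_2\|_w$. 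Since $J_{T-1}(\varPi,\cdot)\to J_\infty(\varPi,\cdot)$ in the $w$-norm, continuity lets me pass to the limit in $J_T(\varPi,\cdot)=\Df^{\pi}J_{T-1}(\varPi,\cdot)$: the right side tends to $\Df^{\pi}J_\infty(\varPi,\cdot)$ and the left side to $J_\infty(\varPi,\cdot)$, so $J_\infty(\varPi,\cdot)=\Df^{\pi}J_\infty(\varPi,\cdot)$, which is \eqref{DPE}, while \eqref{DPE2} was already noted.

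For the converse, suppose $v\in\V$ is $w$-bounded, $v(x_{\rm A})=0$, and $v=\Df^{\pi}v$. Iterating gives $v=(\Df^{\pi})^T v$ for every $T$. Unwinding $(\Df^{\pi})^T v$ via \eqref{Markov-risk} and the definition \eqref{conditional-ext} of $\rho_{1,T+1}^\varPi$ identifies it with the finite-horizon nested risk measure carrying the terminal function $v$, namely $(\Df^{\pi})^T v = \rho_{1,T+1}^\varPi\big(0,Z_2,\dots,Z_{T+1}+v(x_{T+1})\big)$. Remark~\ref{r:valuePi} states that such a limit equals $J_\infty(\varPi,\cdot)$ for \emph{any} $w$-bounded terminal function, so $(\Df^{\pi})^T v \to J_\infty(\varPi,\cdot)$. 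As the left side equals $v$ for every $T$, I conclude $v=J_\infty(\varPi,\cdot)$ on $\widetilde{\X}$, and both sides are $0$ at $x_{\rm A}$.

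The main obstacle is the existence direction: establishing the $w$-norm Lipschitz property of $\Df^{\pi}$ so that the limit can be interchanged with $\sigma$. This is where the dual representation and the single-step consequence of risk-transience do the real work; everything else reduces to bookkeeping with the nested structure and to invoking Theorem~\ref{t:limitTk} and Remark~\ref{r:valuePi}. A subtlety to handle carefully is that all integrals should genuinely live on the effective space $\widetilde{\X}$: since $\varphi_1-\varphi_2$ (and $v$) vanish at $x_{\rm A}$, the absorbing-state contribution drops out and the measures may be replaced by the restricted selectors of $\widetilde{\Mf}^{\pi}$, which is exactly the setting in which the bound \eqref{Pliska-risk} was defined.
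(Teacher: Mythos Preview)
Your proposal is correct and follows the same overall architecture as the paper: write the recursion $J_T(\varPi,\cdot)=\Df^{\pi}J_{T-1}(\varPi,\cdot)$, pass to the limit using continuity of $\Df^{\pi}$ to obtain the fixed-point equation for $J_\infty$, and for uniqueness iterate \eqref{DPE} to a finite-horizon expression with terminal term $v(x_{T+1})$ and let $T\to\infty$.

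The execution differs in two places. For the continuity step, the paper simply cites that $\sigma(\cdot,x,\mu)$, being a finite coherent risk measure on the Banach lattice $\V$, is continuous \cite[Prop.~3.1]{RS2006a}, and then uses the $\V$-convergence of $J_T$ from Theorem~\ref{t:limitTk}. You instead extract an explicit $\|\cdot\|_w$-Lipschitz bound for $\Df^{\pi}$ from the dual representation and the $T=1$ case of \eqref{Pliska-risk}; this is more self-contained and gives a quantitative modulus, at the price of a small measurable-selection detail (every $\mu\in\widetilde{\Mf}^{\pi}(x)$ must be realized as the value at $x$ of some measurable selector, which is routine since singletons are Borel). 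For the uniqueness direction, the paper re-derives from scratch the sandwich estimate $|v(x_1)-J_T(\varPi,x_1)|\le \rho_{1,T+1}^\varPi(0,\dots,0,|v(x_{T+1})|)$ via subadditivity and convexity of $\rho_{1,T+1}^\varPi$, and then shows the right side vanishes using risk-transience. Your argument is shorter: you recognize $(\Df^{\pi})^T v=\rho_{1,T+1}^\varPi(0,Z_2,\dots,Z_{T+1}+v(x_{T+1}))$ and invoke Remark~\ref{r:valuePi} directly. This avoids duplicating work already done in the proof of Theorem~\ref{t:limitTk}, and is arguably the cleaner packaging.
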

\begin{proof}
Denote $Z_t=c(x_{t-1},u_{t-1},x_t)$.
Suppose a $w$-bounded function $v\in \V$ satisfies the equations \eqref{DPE}--\eqref{DPE2}.
By (G3),
$c(x,\pi(x),\cdot)\in \V$, and thus the right-hand side of \eqref{DPE} is well-defined.
By iteration of (\ref{DPE}), we  obtain for all $x_1\in\X$  the following equation:
\[
v(x_{1}) =  \rho_1^\varPi\bigg(c(x_1,u_1,x_2)+ \rho_2^\varPi\Big( c(x_2,u_2,x_3)+
 \cdots + \rho_{T}^\varPi\big(c(x_{T},u_{T},x_{T+1}) + v(x_{T+1})\big) \cdots \Big)\bigg).
\]
Denote $Z_t=c(x_{t-1},u_{t-1},x_t)$.
Using  subadditivity and monotonicity of the conditional risk measures we deduce that:
\begin{equation}
\label{bounds}
\begin{aligned}
v(x_{1})&=\rho_{1,T+1}^\varPi\big(0,Z_2,\dots,Z_{T+1}+v(x_{T+1})\big)
\leq \rho_{1,T+1}^\varPi\big(0,Z_2,\dots,Z_{T+1}\big) + \rho_{1,T+1}^\varPi\big(0,0,\dots,v(x_{T+1}) \big)\\
  &\leq \rho_{1,T+1}^\varPi\big(0,Z_2,\dots,Z_{T+1}\big) + \rho_{1,T+1}^\varPi\big(0,0,\dots,|v(x_{T+1})| \big).
  \end{aligned}
\end{equation}
By convexity of $\rho_{1,T+1}^\varPi(\cdot)$,
\begin{align}
2 \rho_{1,T+1}^\varPi\big(0,Z_2,\dots,Z_{T+1}\big) &\le
 \rho_{1,T+1}^\varPi\big(0,Z_2,\dots,Z_{T+1}+ v(x_{T+1})\big) +
 \rho_{1,T+1}^\varPi\big(0,Z_2,\dots,Z_{T+1}- v(x_{T+1})\big) \nonumber \\
& =
 v(x_1) +
 \rho_{1,T+1}^\varPi\big(0,Z_2,\dots,Z_{T+1}- v(x_{T+1})\big). \label{iter-convex}
\end{align}
Similar to \eqref{bounds},
\begin{align*}
\rho_{1,T+1}^\varPi\big(0,Z_2,\dots,Z_{T+1}- v(x_{T+1})\big)
&\le
\rho_{1,T+1}^\varPi\big(0,Z_2,\dots,Z_{T+1}\big) + \rho_{1,T+1}^\varPi\big(0,0,\dots,-v(x_{T+1})\big)\\
&\le
\rho_{1,T+1}^\varPi\big(0,Z_2,\dots,Z_{T+1}\big) + \rho_{1,T+1}^\varPi\big(0,0,\dots,|v(x_{T+1})|\big).
\end{align*}
Substituting into (\ref{iter-convex}) we obtain
\[
v(x_1)\ge \rho_{1,T+1}^\varPi\big(0,Z_2,\dots,Z_{T+1})) -\rho_{1,T+1}^\varPi\big(0,0,\dots,|v(x_{T+1})|\big).
\]
Combining this estimate with \eqref{bounds}, we conclude that
\begin{equation}
\label{errorJ}
\big| v(x_1) - J_T(\varPi,x_1)\big| \le \rho_{1,T+1}^\varPi\big(0,0,\dots,|v(x_{T+1})|\big).
\end{equation}
Consider the function
\[
d_{1,T}(x_1) = \rho_{1,T+1}^\varPi\big(0,0,\dots,|v(x_{T+1})|\big).
\]
Proceeding exactly as in the proof of Theorem \ref{t:limitTk}, we obtain a representation similar to \eqref{v1rep}:
\[
d_{1,T} = \widetilde{M}_{1}\cdots\widetilde{M}_{T}|v|,
\]
with $\widetilde{M}_j\lessdot \widetilde{\Mf}^{\pi_{}}$, $j=1,\dots,T_2-1$. Thus,
$d_{1,T} = A_T |v|$, with $A_T\lessdot \big(\widetilde{\Mf}^{\pi_{}}\big)^T$.
By Definition \ref{d:risk-transient}, for any sequence of selectors $A_t\lessdot \big(\widetilde{\Mf}^{\pi_{}}\big)^t$,
$t=1,2\dots,$ we have
$\big\| \sum_{t=1}^\infty A_t \big\|_w \le K$.
Therefore, $\big\|A_t\big\|_w \to 0$ and  $\big\|d_{1,t}\|_w\to 0$,  as $t\to\infty$.
Using this in \eqref{errorJ} we conclude that
$v(\cdot) \equiv J_{\infty}(\varPi,\cdot)$, as postulated.

To prove the converse implication we can use the fact that all conditional risk measures $\rho_t^\varPi(\cdot)$
share the same risk transition mapping
$\sigma(\cdot,\cdot,\cdot)$ to rewrite (\ref{JT1}) as follows:
\[
J_T(\varPi,x_1)  =  \sigma\big(c(x_1,\pi(x_1),\cdot)+ J_{T-1}(\varPi,\cdot),x_1,Q(x_1,\pi(x_1))\big).
\]
The function $\sigma(\cdot,x_1,\mu)$, as a finite-valued coherent measure of risk on a Banach lattice $\V$, is continuous
(see \cite[Prop. 3.1]{RS2006a}). By Theorem \ref{t:limitTk}, the sequence
$\big\{J_T(\varPi,\cdot)\big\}$ is convergent to $J_{\infty}(\varPi,\cdot)$ in the space $\V$, and $J_{\infty}(\varPi,\cdot)$ is $w$-bounded.
Therefore,
\[
\lim_{T\to\infty} J_T(\varPi,x_1)  =  \sigma\big(c(x_1,\pi(x_1),\cdot)+ \lim_{T\to\infty} J_{T-1}(\varPi,\cdot),x_1,Q(x_1,\pi(x_1))\big).
\]
This is identical with equation (\ref{DPE}) with $v(\cdot)\equiv J_{\infty}(\varPi,\cdot)$. Equation (\ref{DPE2}) is obvious.
\hfill$\Box$\\
\end{proof}

\section{Dynamic Programming Equations for Infinite Horizon Problems}
\label{s:infinite-DP}

We shall now focus on the optimal value function
\begin{equation}
\label{Jstar}
J^*(x) = \inf_{\varPi\in \Pi^{\text{DM}}} J_{\infty}(\varPi,x), \quad x\in \X,
\end{equation}
where $\Pi^{\text{DM}}$ is the set of all stationary deterministic Markov policies.
To simplify notation,
we define the operators $\Df:\V\rightarrow \V$ and  $\Df_{\pi}:\mathcal{V}\rightarrow \V$ as follows:
\begin{align}
&[\Df v](x) = \min_{u \in U(x)} \sigma\big(c(x,u,\cdot) + v(\cdot),x,Q(x,u)\big), \quad x \in \X, \label{Valopt}\\
&[\Df_{\pi} v] (x) = \sigma\big(c(x,\pi(x),\cdot) + v^k(\cdot),x,Q(x,\pi(x))\big) , \quad x \in \X,  \label{Valfixed}
\end{align}
where $\pi \lessdot {U}$. Owing to the monotonicity of $\sigma(\cdot,x,\mu)$, both operators are nondecreasing.
By construction, $\Df v \le \Df_{\pi} v$ for all $v\in \V$ and all $\pi\lessdot U$.

\begin{theorem}
\label{t:DPopt}
Assume that conditions {\rm (G0)--(G4)} are satisfied and that the model is uniformly risk-transient.
Then a measurable $w$-bounded function $v:\X\to\R$ satisfies the equations
\begin{align}
v(x) &= \inf_{u\in U(x)}\sigma\big(c(x,u,\cdot) + v(\cdot),x,Q(x,u)\big), \quad x\in \X, \label{DPopt1}\\
v(x_{\rm A}) &=0, \label{DPopt2}
\end{align}
if and only if $v(x) = J^*(x)$ for all $x\in \X$. Moreover, a measurable minimizer $\pi^*(x)$, $x\in \X$,
on the right hand side of \eqref{DPopt1} exists
and defines an optimal deterministic  Markov policy $\varPi^*=\{\pi^*,\pi^*,\dots\}$.
\end{theorem}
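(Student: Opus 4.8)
The plan is to work entirely with the operators $\Df$ and $\Df_{\pi}$ of \eqref{Valopt}--\eqref{Valfixed}, using their monotonicity and the pointwise relation $\Df v\le\Df_{\pi}v$, and to reduce the two implications to the policy-evaluation theorem (Theorem~\ref{t:DPfixed}) and to the convergence of value iterates recorded in Remark~\ref{r:valuePi}. The single fact I would lean on throughout is that, for a stationary policy $\varPi=\{\pi,\pi,\dots\}$ and any $w$-bounded $f$, the iterate $\Df_{\pi}^{\,n}f$ is exactly the $n$-horizon value with terminal cost $f$, so that $\Df_{\pi}^{\,n}f\to J_{\infty}(\varPi,\cdot)$ by Remark~\ref{r:valuePi}; uniform risk-transience makes this valid for every stationary $\pi$ and, via Theorem~\ref{t:limitTk}, makes $J_{\infty}(\varPi,\cdot)$ and hence $J^*$ $w$-bounded.

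For the ``only if'' part, assume a $w$-bounded $v\in\V$ solves \eqref{DPopt1}--\eqref{DPopt2}. To obtain $v\le J^*$, fix an arbitrary $\pi\lessdot U$; then $v=\Df v\le\Df_{\pi}v$, and applying the monotone $\Df_{\pi}$ repeatedly gives the increasing chain $v\le\Df_{\pi}v\le\Df_{\pi}^2 v\le\cdots$, whose limit is $J_{\infty}(\varPi,\cdot)$ by Remark~\ref{r:valuePi}. Hence $v\le J_{\infty}(\varPi,\cdot)$ for every stationary $\varPi$, so $v\le J^*$. For $v\ge J^*$, I would use Proposition~\ref{p:meas} to pick a measurable minimizer $\pi_v\lessdot U$ in \eqref{DPopt1}, giving $v=\Df_{\pi_v}v$; this is the evaluation equation \eqref{DPE}--\eqref{DPE2}, so Theorem~\ref{t:DPfixed} yields $v=J_{\infty}(\{\pi_v,\pi_v,\dots\},\cdot)\ge J^*$. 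The two bounds give $v=J^*$.

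For the ``if'' part I would show directly that $J^*$ solves the equations, which also delivers the optimal policy. The inequality $\Df J^*\le J^*$ follows because, for each $\pi$, $\Df J^*\le\Df_{\pi}J^*\le\Df_{\pi}J_{\infty}(\varPi,\cdot)=J_{\infty}(\varPi,\cdot)$, the last equality by Theorem~\ref{t:DPfixed}; taking the infimum over $\pi$ gives $\Df J^*\le J^*$. For the reverse inequality, let $\pi^*$ be a minimizing selector of $\Df J^*$ from Proposition~\ref{p:meas}, so $\Df J^*=\Df_{\pi^*}J^*\le J^*$. Applying the monotone $\Df_{\pi^*}$ shows $\{\Df_{\pi^*}^{\,n}J^*\}$ is nonincreasing and, by Remark~\ref{r:valuePi}, converges to $J_{\infty}(\varPi^*,\cdot)$ with $\varPi^*=\{\pi^*,\pi^*,\dots\}$; hence $J_{\infty}(\varPi^*,\cdot)\le\Df_{\pi^*}J^*=\Df J^*$. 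Since also $J^*\le J_{\infty}(\varPi^*,\cdot)$, I get the sandwich $J_{\infty}(\varPi^*,\cdot)\le\Df J^*\le J^*\le J_{\infty}(\varPi^*,\cdot)$, forcing $\Df J^*=J^*=J_{\infty}(\varPi^*,\cdot)$. The last equality shows $\varPi^*$ is optimal, and $J^*(x_{\rm A})=0$ is immediate from $c(x_{\rm A},\cdot,x_{\rm A})=0$.

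The most delicate step is the convergence $\Df_{\pi}^{\,n}f\to J_{\infty}(\varPi,\cdot)$ for a general $w$-bounded terminal cost $f$, not merely $f\equiv0$; this is exactly what Remark~\ref{r:valuePi} and uniform risk-transience secure, by controlling the tail terms $A_n|f|$ with $A_n\lessdot(\widetilde{\Mf}^{\pi})^{n}$ and forcing $\|A_n\|_w\to0$. A secondary point, needed before any operator is applied to $J^*$, is that $J^*$ is measurable and $w$-bounded: the uniform $w$-bound comes from Theorem~\ref{t:limitTk}, and measurability from the joint lower semicontinuity of $(\pi,x)\mapsto J_{\infty}(\varPi,x)$ there, combined with a measurable-selection argument over $\Pi^{\text{DM}}$.
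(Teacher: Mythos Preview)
Your ``only if'' direction is essentially the paper's converse argument: pick a measurable minimizer $\hat\pi$ via Proposition~\ref{p:meas}, invoke Theorem~\ref{t:DPfixed} to get $v=J_\infty(\hat\varPi,\cdot)\ge J^*$, and iterate $v\le\Df_\pi v$ with Remark~\ref{r:valuePi} to get $v\le J_\infty(\varPi,\cdot)$ for every stationary $\varPi$. That part is fine.

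The ``if'' direction, however, diverges from the paper and has a real gap. You want to apply $\Df$ to $J^*$ and sandwich, but $\Df J^*$ only makes sense once you know $J^*\in\V$, in particular that $J^*$ is \emph{measurable}. You flag this as ``a secondary point'' and propose to deduce it from the joint lower semicontinuity of $(\pi,x)\mapsto J_\infty(\varPi,x)$ in Theorem~\ref{t:limitTk} together with ``a measurable-selection argument over $\Pi^{\text{DM}}$''. This is where the argument breaks down: the paper never equips $\Pi^{\text{DM}}$ with a topology making it a Polish (or even separable metrizable) space, and an infimum of an uncountable family of lower semicontinuous functions is not measurable in general. The projection/selection theorems you would need require exactly the structure that is missing here. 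So as written, your proof of $\Df J^*=J^*$ assumes what has not been established.

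The paper avoids this issue entirely by a \emph{policy iteration} construction: starting from any $\pi^1\lessdot U$, it builds a monotone sequence $v^k=J_\infty(\varPi^k,\cdot)$ with $v^{k+1}\le\Df v^k\le v^k$, passes to the pointwise (hence measurable) limit $v^\infty$, uses continuity of $\sigma(\cdot,x,m)$ to show $v^\infty=\Df v^\infty$, and then applies your iteration-and-Remark~\ref{r:valuePi} argument to conclude $v^\infty=J^*$. Measurability of $J^*$ thus falls out as a byproduct rather than being assumed. If you want to keep your sandwich argument for the ``if'' direction, you would first need to manufacture \emph{some} measurable fixed point of $\Df$---and policy iteration (or value iteration, as in the Proposition following Theorem~\ref{t:DPopt-gen}) is precisely the device that does this.
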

\begin{proof}
Consider a sequence of Markov deterministic policies $\varPi^k= \big\{ \pi^k,\pi^k,\dots\big\}$, $k=1,2,\dots$ constructed in the following way.
We choose any $\pi^1 \lessdot U$. Its value $v^1(\cdot) = J_{\infty}(\varPi^1,\cdot)$ is then given by equations
\eqref{DPE}--\eqref{DPE2}. For $k=1,2,\dots$ we determine $\pi^{k+1}(\cdot)$ as the measurable solution of the problem
\begin{equation}
\label{policy-iteration}
\min_{u\in U(x)} \sigma\big(c(x,u,\cdot) + v^k(\cdot),x,Q(x,u)\big), \quad x\in \X,
\end{equation}
which exists by Proposition \ref{p:meas}. The corresponding value of the policy $\varPi^{k+1}=\big\{\pi^{k+1},\pi^{k+1},\dots\big\}$
is the function $v^{k+1}(\cdot) = J_{\infty}(\varPi^{k+1},\cdot)$, and the iteration continues.
By construction, the sequences $\{\pi^k\}$ and $\{v^k\}$ satisfy the relations:
\begin{equation}
\label{basic-relations}
\Df_{\pi^{k+1}}v^{k} = \Df v^{k} \leq \Df_{\pi^{k}}v^{k}=v^{k}.
\end{equation}
Applying the operator $\Df_{\pi^{k+1}}$ to this relation,   we deduce from its monotonicity that
\begin{equation}
[\Df_{\pi^{k+1}}]^{T-1}v^{k} \leq \Df_{\pi^{k+1}}v^{k}=\Df v^{k} \leq v^{k}, \quad T=2,3,\ldots. \label{eq:PolMon}
\end{equation}
Relation \eqref{eq:PolMon} can be equivalently written as
\[
\rho_{1,T}^{\varPi^{k+1}}\Big(0,Z_2, \dots,Z_T + v^k(x_{T})\Big)
 \leq [\Df v^{k}](x_1) \leq v^{k}(x_1),
\]
where $Z_t=c(x_{t-1},u_{t-1},x_{t})$, $t=2,3,\ldots,T$, is the cost sequence resulting from the policy $\varPi^{k+1}$.
Passing to the limit with $T \rightarrow \infty$, from Remark \ref{r:valuePi} we conclude that the sequence $\{v^{k}\}$ is nonincreasing:
\begin{equation}
\label{squeeze}
v^{k+1}(x)=J_{\infty}(\varPi^{k+1},x) \leq [\Df v^{k}](x) \leq v^{k}(x), \quad x\in {\X}, \quad k=0,1,2,\ldots.
\end{equation}
Since $v^{k} \geq J^*$, the sequence $\{v^k\}$ is monotonically convergent to some limit $v^{\infty}\geq J^*$.
By Lebesgue Theorem, it is also convergent in the space $\V$. As
the function $\sigma\big(\cdot,x,\mu\big)$ is a coherent measure of risk, it follows from \cite[Prop. 3.1]{RS2006a} that
it is continuous on $\V$ and thus
\begin{equation}
\label{sigmalimit2}
\sigma\big(c(x,u,\cdot) + v^k(\cdot),x,Q(x,u)\big) \downarrow
\sigma\big(c(x,u,\cdot) + v^\infty(\cdot),x,Q(x,u)\big),\quad \text{as}\quad k\to \infty,\quad
\forall\;u\in U(x).
\end{equation}
The left inequality in \eqref{squeeze} also implies that
\begin{equation}
\label{lambdakbest2}
v^{k+1}(x)
\le \sigma\big(c(x,u,\cdot) + v^k(\cdot),x,Q(x,u)\big) ,\quad \forall\;u\in U(x).
\end{equation}
Passing to the limit with $k\to\infty$ on both sides of \eqref{lambdakbest2} and using \eqref{sigmalimit2}, we conclude that
\[
v^{\infty}(x) \le \sigma\big(c(x,u,\cdot) + v^\infty(\cdot),x,Q(x,u)\big) ,\quad \forall\;u\in U(x).
\]
Since this is true for all $x\in {\X}$ and all $u\in U(x)$, it follows that
\begin{equation}
\label{vinfty-left}
v^{\infty}(x) \le [\Df v^{\infty}](x) =  \min_{u\in U(x)} \sigma\big(c(x,u,\cdot) + v^\infty(\cdot),x,Q(x,u)\big), \quad x\in \X.
\end{equation}
Iterating this inequality, we conclude that for every feasible decision rule $\pi$ and every $T=1,2,\dots$
  \[
  v^{\infty} \le [\Df]^{T} v^{\infty} \le [\Df_\pi]^{T} v^{\infty}.
  \]
Owing to Remark \ref{r:valuePi}, the right hand side of this inequality is convergent to $J_\infty(\varPi,\cdot)$, when $T\to\infty$.
Since $\varPi$ was arbitrary, $v^{\infty} \le J^*$, and thus $v^{\infty} = J^*$. It remains to show that $v^\infty$ satisfies
equation \eqref{DPopt1}. It follows from the monotonicity of the operator $\Df$ and relation \eqref{basic-relations}, that
\[
\Df v^{\infty} \le \Df v^{k} \le v^k.
\]
Passing to the limit with $k\to\infty$ we conclude that
$\Df v^{\infty} \le v^{\infty}$.
This combined with \eqref{vinfty-left} yields $v^{\infty} = \Df v^{\infty}$, which is equation \eqref{DPopt1}. Denoting by
$\pi^*(x)$ the (measurable) minimizer on the right hand side of \eqref{vinfty-left}, we also see that $v^{\infty} = \Df_{\pi^*} v^{\infty}$.
By Theorem \ref{t:DPfixed}, $v^\infty(\cdot)=J_\infty(\varPi^*,\cdot)$.

To prove the converse implication, suppose $v\in \V$ satisfies \eqref{DPopt1}--\eqref{DPopt2} and $\|v\|_w<\infty$.
By Proposi\-tion~\ref{p:meas}, a measurable minimizer $\hat{\pi}(x)$
 on the right hand side of \eqref{DPopt1} exists.
 We obtain the equation
\[
v(x) = \sigma\big(c(x,\hat{\pi}(x),\cdot) + v(\cdot),x,Q(x,\hat{\pi}(x))\big), \quad x\in \X.
\]
Due to Theorem \ref{t:DPfixed},
\begin{equation}
\label{v-upper}
v(x) = J_{\infty}(\hat{\varPi},x)\ge J^*(x), \quad x\in \X,
\end{equation}
where $\hat{\varPi}=\{\hat{\pi},\hat{\pi},\dots\}$. On the other hand,
it follows from (\ref{DPopt1}) that for any stationary deterministic Markov  policy $\varPi=\{\pi,\pi,\dots\}$ we have
\begin{equation}
\label{ineqv}
v(x) \le \sigma\big(c(x,\pi(x),\cdot) + v(\cdot),x,Q(x,\pi(x))\big), \quad x\in \X.
\end{equation}
The risk transition mapping $\sigma$ is nondecreasing with respect to the first argument.
Therefore, iterating inequality \eqref{ineqv} we obtain the following inequality:
\[
v(x_{1}) \le \rho_{1,T}^{\varPi}\big(0,Z_{2}, \dots,Z_{T} + v(x_T)\big),
\]
Passing to the limit with $T\to\infty$ and applying Remark \ref{r:valuePi},
we obtain for all stationary deterministic Markov  policies $\varPi=\{\pi,\pi,\dots\}$ the inequality
$v(\cdot) \le J_{\infty}(\varPi,\cdot)$.
The last estimate together with (\ref{v-upper}) implies that $v(\cdot)\equiv J^*(\cdot)$ and that the
stationary policy  $\hat{\varPi}$ is optimal.
\hfill$\Box$\\
\end{proof}
\begin{remark}
\label{r:policy-iteration}
The construction of the sequence of stationary Markov policies $\{\varPi^k\}$ and their corresponding values $\{v^k\}$ employed
in the first part of the proof of Theorem \ref{t:DPopt} can be interpreted as a risk-averse version of the policy iteration method.
\end{remark}

We can now address the case of  non-stationary deterministic policies.
For a deterministic policy $\varLambda=\{\lambda_1,\lambda_2,\dots\}$ we define
\[
J_{\infty}(\varLambda,x_1) = \liminf_{T\to\infty} J_T(\varLambda,x_1)\qquad
\text{and} \qquad
\hat{J}(x_1) = \inf_{\varLambda} J_{\infty}(\varLambda,x_1).
\]
In the theorem below, we make an additional technical assumption that the function $\hat{J}(\cdot)$ is measurable.
It is obviously satisfied for a finite or countable state space. In general, its verification,
even in the expected value case, requires additional assumptions (see, e. g., \cite[sec. 9.3]{HLL2}). We discuss some sufficient conditions
after the theorem.


\begin{theorem}
\label{t:DPopt-gen}
Assume that conditions {\rm (G0)--(G4)} are satisfied and that the model is uniformly risk-transient. Additionally,
assume that the function $\hat{J}(\cdot)$ is measurable and a constant $C$ exists such that
$J_{\infty}(\varLambda,x) \ge -C w(x)$ for all $x\in \X$ and for all policies $\varLambda$.
Then a  $w$-bounded function $v\in \V$ satisfies the equations \eqref{DPopt1}--\eqref{DPopt2}
if and only if $v(x) = \hat{J}(x)$ for all $x\in \X$. Moreover, a measurable minimizer $\pi^*(x)$, $x\in \X$,
on the right hand side of \eqref{DPopt1} exists
and defines an optimal deterministic policy $\varPi^*=\{\pi^*,\pi^*,\dots\}$.
\end{theorem}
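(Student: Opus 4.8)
The plan is to reduce everything to the single equality $\hat{J}=J^*$, where $J^*$ is the value over stationary deterministic Markov policies analyzed in Theorem~\ref{t:DPopt}. That theorem already asserts that a $w$-bounded $v\in\V$ solves \eqref{DPopt1}--\eqref{DPopt2} if and only if $v=J^*$, and it supplies a measurable stationary minimizer $\pi^*$. Since the present statement concerns the very same equations, once I establish $J^*=\hat{J}$ both the equivalence and the optimality of $\varPi^*=\{\pi^*,\pi^*,\dots\}$ in the full class of deterministic policies follow at once; the assumed measurability of $\hat{J}$ and the lower bound $J_\infty(\varLambda,x)\ge -Cw(x)$ are precisely what make $\hat{J}$ a legitimate measurable, $w$-bounded function, so that the asserted identification $v=\hat{J}$ is meaningful and $\hat J>-\infty$.

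The inequality $\hat{J}\le J^*$ is immediate: every stationary policy is a deterministic policy, and for stationary policies the limit defining $J_\infty$ exists by Theorem~\ref{t:limitTk} and coincides with the $\liminf$, so enlarging the admissible class from $\Pi^{\mathrm{DM}}$ to all deterministic policies can only lower the infimum. The substance is therefore the reverse inequality $J^*(x_1)\le J_\infty(\varLambda,x_1)$ for an arbitrary deterministic policy $\varLambda=\{\lambda_1,\lambda_2,\dots\}$ and arbitrary $x_1$. Setting $v=J^*$ and using that $v$ solves \eqref{DPopt1} for the particular control $\lambda_t(x_t)$, I obtain through \eqref{Markov-risk} the one-step inequality $v(x_t)\le\rho_t^\varLambda\big(Z_{t+1}+v(x_{t+1})\big)$, where $Z_{t+1}=c(x_t,\lambda_t(x_t),x_{t+1})$. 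Iterating and invoking monotonicity gives $v(x_1)\le\rho_{1,T+1}^\varLambda\big(0,Z_2,\dots,Z_{T+1}+v(x_{T+1})\big)$, and then subadditivity together with $|v|\le\|v\|_w\,w$ yields the key estimate $v(x_1)\le J_T(\varLambda,x_1)+e_T(x_1)$, with $e_T(x_1)=\rho_{1,T+1}^\varLambda\big(0,\dots,0,|v(x_{T+1})|\big)$. Taking $\liminf$ in $T$ then reduces the entire inequality to the single claim $e_T(x_1)\to 0$.

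Showing $e_T(x_1)\to 0$ is the main obstacle, and it is exactly where risk-transience must be used in its full policy-uniform strength. Repeating the multikernel bookkeeping of Theorems~\ref{t:limitTk} and~\ref{t:DPfixed}, I can write $e_T=A_T|v|$ with $A_T$ a measurable selector of the \emph{composition} $\widetilde{\Mf}^{\lambda_1}\widetilde{\Mf}^{\lambda_2}\cdots\widetilde{\Mf}^{\lambda_T}$. The new difficulty, absent from the stationary analysis, is that these are compositions of \emph{distinct} multikernels rather than powers of a single one: each factor is a selector of the common multikernel $\Mf^{\circ}(x)=\bigcup_{u\in U(x)}\A(x,Q(x,u))$, so $A_T\lessdot(\Mf^{\circ})^T$. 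Read as a bound valid for \emph{all} Markov policies, and in particular non-stationary ones, uniform risk-transience furnishes $\big\|\sum_{j=1}^{T}\widetilde{\Mf}^{\lambda_1}\cdots\widetilde{\Mf}^{\lambda_j}\big\|_w\le K$ for every $T$. Because the summands are nonnegative kernels and the Minkowski sum allows the worst selector to be chosen independently in each slot, this bound specializes at the fixed point $x_1$ to $\sum_{j\ge1}e_j(x_1)\le K\|v\|_w\,w(x_1)<\infty$, whence $e_T(x_1)\to 0$.

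With $e_T(x_1)\to 0$ established, the estimate of the third paragraph gives $v(x_1)\le\liminf_T J_T(\varLambda,x_1)=J_\infty(\varLambda,x_1)$; as $\varLambda$ and $x_1$ are arbitrary, this yields $J^*\le\hat{J}$ and hence $J^*=\hat{J}$. The stationary policy $\varPi^*$ produced by Theorem~\ref{t:DPopt} attains this common value and is therefore optimal in the full class of deterministic policies, and Proposition~\ref{p:meas} guarantees the measurability of the minimizer. The lower bound hypothesis enters to ensure that $J_\infty(\varLambda,\cdot)$, and hence $\hat J$, stays bounded below by $-Cw$, so that the $\liminf$ manipulations do not degenerate and $\hat J$ is genuinely $w$-bounded; measurability of $\hat J$ is assumed outright, as its verification in the uncountable-state case is delicate. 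I expect the only serious difficulty to be the uniform control of the mixed compositions $\widetilde{\Mf}^{\lambda_1}\cdots\widetilde{\Mf}^{\lambda_T}$, which is what distinguishes the non-stationary argument here from the stationary evaluation in Theorems~\ref{t:limitTk} and~\ref{t:DPopt}.
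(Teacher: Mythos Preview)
Your reduction to the single identity $\hat{J}=J^*$ is the right target, and the trivial direction $\hat{J}\le J^*$ is fine. The problem is in your proof of the reverse inequality, specifically in the control of the error term $e_T(x_1)=\rho_{1,T+1}^{\varLambda}\big(0,\dots,0,|v(x_{T+1})|\big)$ for a \emph{non-stationary} policy~$\varLambda$.

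The gap is this: Definition~\ref{d:risk-transient} bounds selectors of $\sum_{j=1}^{T}(\widetilde{\Mf}^{\pi})^{j}$, i.e., of \emph{powers} of the multikernel associated with a \emph{single} decision rule~$\pi$. Uniform risk-transience makes the constant~$K$ independent of the choice of~$\pi$, but it is still a statement about powers of one multikernel at a time. Your error term, however, is $A_T|v|$ with $A_T$ a selector of the \emph{mixed} composition $\widetilde{\Mf}^{\lambda_1}\widetilde{\Mf}^{\lambda_2}\cdots\widetilde{\Mf}^{\lambda_T}$, and nothing in the paper's definition gives a uniform bound on such products. Your attempt to absorb all factors into a master multikernel $\Mf^{\circ}(x)=\bigcup_{u\in U(x)}\A(x,Q(x,u))$ does not help, because $\Mf^{\circ}$ is not of the form $\Mf^{\pi}$ for any decision rule, so Definition~\ref{d:risk-transient} says nothing about $(\widetilde{\Mf}^{\circ})^{T}$ either. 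A uniform bound on $\|(\widetilde{M}^{\pi})^{j}\|_w$ over all stationary~$\pi$ does not, in general, imply a bound on alternating products $\widetilde{M}^{\pi_1}\widetilde{M}^{\pi_2}\cdots$, so this step genuinely fails as written.

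The paper avoids this difficulty entirely by running the argument in the opposite direction: it establishes the single inequality $\hat{J}\ge\Df\hat{J}$ directly, by pushing the $\liminf$ through $\rho_1$ via monotonicity and continuity, and then moving the infimum over the tail policy $\varLambda^2$ inside by monotonicity. Once $\hat{J}\ge\Df\hat{J}$ is known, the measurable minimizer $\hat{\lambda}$ of Proposition~\ref{p:meas} yields $\hat{J}\ge\Df_{\hat{\lambda}}\hat{J}$, and iterating with this \emph{fixed} decision rule gives $\hat{J}\ge[\Df_{\hat{\lambda}}]^{T}\hat{J}$. The passage to the limit now uses only Remark~\ref{r:valuePi} for the \emph{stationary} policy $\hat{\varLambda}=\{\hat{\lambda},\hat{\lambda},\dots\}$, for which the risk-transience estimates of Theorem~\ref{t:limitTk} apply verbatim. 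Thus the paper never needs to control mixed compositions: the non-stationary policy is eliminated at the very first step, and only powers of a single $\widetilde{\Mf}^{\hat{\lambda}}$ ever appear.
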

\begin{proof}
As for stationary Markov policies $\varPi$ we have $\big\|J_{\infty}(\varPi,\cdot)\big\|_w < \infty$, in view of the additional assumption
we have  $\big\| \hat{J} \big\|_w < \infty$.
Denote $\varLambda^2 = \{ \lambda_2,\lambda_3,\dots\}$. Due to the monotonicity and continuity of $\rho_1^\varPi(\cdot)$, we obtain the chain of relations
\begin{align*}
\hat{J}(x_1)  &= \inf_{\lambda_1,\lambda_2,\dots} \liminf_{T\to\infty} \rho_1^{\lambda_1}\big(c(x_1,\lambda_1(x_1),x_2)+  J_{T-1}(\varLambda^2,x_2)\big) \\
&\ge \inf_{\lambda_1,\lambda_2,\dots} \liminf_{T\to\infty} \rho_1^{\lambda_1}\big(c(x_1,\lambda_1(x_1),x_2)
+  \inf_{\tau \ge T-1} J_{\tau}(\varLambda^2,x_2)\big)\\
&= \inf_{\lambda_1,\lambda_2,\dots} \lim_{T\to\infty} \rho_1^{\lambda_1}\big(c(x_1,\lambda_1(x_1),x_2)+  \inf_{\tau \ge T-1} J_{\tau}(\varLambda^2,x_2)\big)\\
& =
\inf_{\lambda_1,\lambda_2,\dots}\rho_1^{\lambda_1}\Big(c(x_1,\lambda_1(x_1),x_2)+ \liminf_{T\to\infty} J_{T-1}(\varLambda^2,x_2)\Big) \\
&= \inf_{\lambda_1,\lambda_2,\dots}\rho_1^{\lambda_1}\big(c(x_1,\lambda_1(x_1),x_2)+ J_{\infty}(\varLambda^2,x_2)\big),
\end{align*}
Owing to the monotonicity of $\rho_1(\cdot)$, we can move the minimization with respect to $\varLambda^2$ inside:
\[
\hat{J}(x_1) \ge \inf_{\lambda_1}\rho_1^{\lambda_1}\Big(c(x_1,\lambda_1(x_1),x_2)+ \inf_{\varLambda^2} J_{\infty}(\varLambda^2,x_2)\Big) =
 \inf_{\lambda_1}\rho_1^{\lambda_1}\big(c(x_1,\lambda_1(x_1),x_2)+ \hat{J}(x_2)\big).
\]
Observe that we assumed that $\hat{J}(\cdot)$ is measurable, and thus the right hand side of the last inequality
is well-defined.
Thus $\hat{J}(\cdot)$ satisfies the inequality:
\begin{equation}
\label{DPstarx}
\hat{J}(x)\ge \inf_{u\in U(x)}\sigma\big(c(x,u,\cdot) + \hat{J}(\cdot),x, Q(x,u)\big), \quad x\in \X.
\end{equation}
We can now repeat the argument from the proof of Theorem \ref{t:DPopt}.
Denote by $\hat{\lambda}(x)$ the minimizer in \eqref{DPstarx}, which exists by Proposition \ref{p:meas}. Iterating inequality \eqref{DPstarx},
and passing to the limit we conclude that
\[
\hat{J}(x) \ge J_{\infty}(\hat{\varLambda},x),\quad x\in \X,
\]
where $\hat{\varLambda} = \{ \hat{\lambda},\hat{\lambda},\dots\}$ is a stationary Markov policy. Therefore, optimization with respect
to stationary Markov policies is sufficient, and the result follows from Theorem \ref{t:DPopt}.
\hfill$\Box$\\
\end{proof}

To prove that our additional technical assumption that $\hat{J}(\cdot)$ is measurable is
true for nonnegative costs $c(\cdot,\cdot,\cdot)$, we shall represent $\hat{J}(\cdot)$ as a limit of functions $v^k(\cdot)$ defined
by the \emph{value iteration method}:
\begin{equation}
\label{value-iteration}
v^{k+1} = \Df v^k,\quad k=0,1,2,\dots,
\end{equation}
where $v^0= 0$.
\begin{proposition}
Assume that conditions {\rm (G0)--(G4)} are satisfied, the mapping $\sigma(\cdot,x,\cdot)$ is continuous, $c(\cdot,\cdot,\cdot)$ is nonnegative,
and the model is uniformly risk-transient. Then the sequence $\{v^k\}$ defined by the value iteration method satisfies the relation
\[
\hat{J}(x) = \lim_{k\to \infty} v^k (x),\quad x\in \X.
\]
Moreover, $\hat{J}\in \V$.
\end{proposition}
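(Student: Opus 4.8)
The plan is to identify the value-iteration limit $v^{\infty}:=\lim_{k\to\infty}v^{k}$ with $\hat{J}$ by squeezing it between $J^{*}$ and $\hat{J}$, and then to read off measurability of $\hat{J}$ from the fact that $v^{\infty}$ is a pointwise limit of the manifestly measurable iterates $v^{k}$. First I would record the structural facts that make the iteration well behaved when $c\ge 0$. Since $\sigma(0,x,m)=0$ by (A4) and $\sigma(\cdot,x,m)$ is monotone by (A2), nonnegativity of $c$ gives $v^{1}=\Df 0\ge 0=v^{0}$; monotonicity of the operator $\Df$ then yields, by induction, that $\{v^{k}\}$ is nondecreasing, while the absorbing-state conventions force $v^{k}(x_{\rm A})=0$ for every $k$. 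By Theorem~\ref{t:finiteDP}, each $v^{k}$ is measurable and equals the optimal value $\inf_{\varLambda}J_{k}(\varLambda,x)$ of the $k$-stage problem with zero terminal cost.

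Boundedness is obtained by comparison with a fixed stationary policy. Choosing any measurable selector $\pi\lessdot U$ and writing $\varPi=\{\pi,\pi,\dots\}$, nonnegativity of $c$ makes $J_{k}(\varPi,\cdot)$ nondecreasing in $k$, so $v^{k}\le J_{k}(\varPi,\cdot)\le J_{\infty}(\varPi,\cdot)$; by Theorem~\ref{t:limitTk} the right-hand side is $w$-bounded, whence $0\le v^{k}\le\|J_{\infty}(\varPi,\cdot)\|_{w}\,w$ uniformly in $k$ and $0\le v^{\infty}\le\|J_{\infty}(\varPi,\cdot)\|_{w}\,w$. Thus $v^{\infty}$ is $w$-bounded, and since the iterates are dominated by $w\in\V$, the monotone convergence $v^{k}\uparrow v^{\infty}$ also holds in the norm of $\V$ (by the Lebesgue theorem, as in the proof of Theorem~\ref{t:DPopt}); this $\V$-convergence is exactly what is needed to invoke the continuity of $\sigma(\cdot,x,m)$.

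Next I would pass to the limit in the recursion $v^{k+1}=\Df v^{k}$ to show that $v^{\infty}$ solves the dynamic programming equation \eqref{DPopt1}--\eqref{DPopt2}. Writing $g_{k}(x,u)=\sigma(c(x,u,\cdot)+v^{k}(\cdot),x,Q(x,u))$ and $g_{\infty}$ for the analogous expression with $v^{\infty}$, the $\V$-convergence $v^{k}\to v^{\infty}$ together with continuity of $\sigma(\cdot,x,Q(x,u))$ gives $g_{k}(x,u)\uparrow g_{\infty}(x,u)$ for each $(x,u)$. For $v^{\infty}\le\Df v^{\infty}$, observe that $\min_{u}g_{k}(x,u)\le g_{k}(x,u')\le g_{\infty}(x,u')$ for every $u'$; letting $k\to\infty$ on the left, where $\min_{u}g_{k}=v^{k+1}\to v^{\infty}$, yields $v^{\infty}(x)\le g_{\infty}(x,u')$ for all $u'$, hence $v^{\infty}\le\Df v^{\infty}$. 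For the reverse, I would pick a minimizer $u^{*}$ of $g_{\infty}(x,\cdot)$, which exists because $U(x)$ is compact by (G4) and $g_{\infty}(x,\cdot)$ is lower semicontinuous by Proposition~\ref{p:lsc} applied with $v=v^{\infty}$; then $\Df v^{\infty}(x)=g_{\infty}(x,u^{*})=\lim_{k}g_{k}(x,u^{*})\ge\lim_{k}\min_{u}g_{k}(x,u)=v^{\infty}(x)$. Combining the two directions gives $v^{\infty}=\Df v^{\infty}$, with the boundary value $v^{\infty}(x_{\rm A})=0$ already in hand. This interchange of $\lim_{k}$ with $\min_{u}$ is the main obstacle, and it is precisely where the assumed continuity of $\sigma$ and the compactness/lower semicontinuity of the selection problem enter.

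Finally I would close the sandwich. Since $v^{\infty}$ is $w$-bounded, measurable, and solves \eqref{DPopt1}--\eqref{DPopt2}, Theorem~\ref{t:DPopt} identifies it with $J^{*}=\inf_{\varPi\in\Pi^{\mathrm{DM}}}J_{\infty}(\varPi,\cdot)$. On the other hand, for any deterministic policy $\varLambda$ and any $k$ we have $v^{k}\le J_{k}(\varLambda,\cdot)\le J_{\infty}(\varLambda,\cdot)$ (again by $c\ge 0$ and horizon-monotonicity, using $J_{\infty}=\liminf_{T}J_{T}=\sup_{T}J_{T}$), so letting $k\to\infty$ and infimizing over $\varLambda$ gives $v^{\infty}\le\hat{J}$. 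Trivially $\hat{J}\le J^{*}$, since the infimum defining $\hat{J}$ ranges over a larger class of policies. Hence $J^{*}=v^{\infty}\le\hat{J}\le J^{*}$, forcing $\hat{J}=v^{\infty}=\lim_{k}v^{k}$; in particular $\hat{J}$ is a pointwise limit of measurable functions and is therefore measurable, and $\hat{J}=v^{\infty}\in\V$.
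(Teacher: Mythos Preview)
Your proof is correct and follows essentially the same route as the paper: establish the monotone chain $v^{k}\le v^{k+1}\le \hat{J}\le J^{*}$, pass to the limit $v^{\infty}$ in $\V$, show $v^{\infty}=\Df v^{\infty}$, and invoke Theorem~\ref{t:DPopt} to conclude $v^{\infty}=J^{*}$, closing the sandwich. The only notable difference is in how the fixed-point equation $v^{\infty}=\Df v^{\infty}$ is obtained: the paper asserts continuity of $\Df$ on $\V$ directly (citing \cite[Thm.~1.4.16]{Aubin-Frankowska} and the assumed continuity of $\sigma(\cdot,x,\cdot)$) and passes to the limit in one step, whereas you argue the two inequalities $v^{\infty}\le \Df v^{\infty}$ and $\Df v^{\infty}\le v^{\infty}$ separately via the monotone pointwise convergence $g_{k}(x,u)\uparrow g_{\infty}(x,u)$; your argument is more self-contained and in fact the reverse inequality follows even more simply from $g_{\infty}\ge g_{k}$ without selecting a minimizer $u^{*}$.
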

\begin{proof}
Owing to the non-negativity of the costs and to the monotonicity of $\sigma(\cdot,x,m)$, the sequence $\{v^k\}$ satisfies the inequalities:
\begin{equation}
\label{values-up}
v^k \le v^{k+1} \le \hat{J} \le J^*,\quad k=0,1,2,\dots,
\end{equation}
where $J^*$ is the best value of stationary Markov policies. Hence, this sequence has a pointwise limit $v^\infty$, which is also (by Lebesgue Theorem)
a limit in $\V$. As $\sigma(\cdot,x,\cdot)$ is continuous, due to \cite[Thm. 1.4.16]{Aubin-Frankowska},  the operator $\Df$ is continuous.
 It follows that
\[
v^{\infty} = \lim_{k\to \infty} v^{k+1} =
 \lim_{k\to\infty} \Df v^k = \Df v^\infty.
\]
By Theorem \ref{t:DPopt} and relations \eqref{values-up}, $v^\infty=\hat{J}=J^*$.
\hfill$\Box$\\
\end{proof}

\section{Randomized Decision Rules}
\label{s:randomized}

Although we focus on deterministic policies in this paper, it may be of use to provide preliminary discussion
of models with randomized policies.
If randomized policies are allowed, we need to revise our definition of the risk transition mapping, to account for
the fact that for a given state $x_t=x$, \emph{both} $u_t$ and $x_{t+1}$ are random.
In this case,
 it is convenient to consider functions on the product space $\U\times\X$ equipped with its product Borel $\sigma$-algebra~$\B$.

Suppose the current state is $x$ and we use a randomized decision rule $\pi$. This control, together with the transition kernel $Q$
defines a probability measure $[\pi Q]_x$ on the product space $\U\times\X$ as follows:
\begin{equation}
\label{d:otimes}
[\pi\comp Q]_x(B_u\times B_y) = \int_{B_u} Q(B_y|x,u)\;\pi(du|x),\quad B_u\in \B(U),\quad B_y\in \B(\X).
\end{equation}
The measure is extended to other sets in $\B$ in a usual way.

The cost incurred at the current stage is given by the function  $c(x,\cdot,\cdot)$  on the product space $\U\times\X$.

The construction below parallels our earlier presentation in section \ref{s:Markov}, but with notational complication
resulting from the need to deal with the product space $\U\times\X$.
We define $\widebar{\V} = \LL_p(\U\times\X,\B,P_0)$, where $p\in [1,\infty)$, where $\widebar{P}_0$ is some reference probability measure on $\U\times \X$.
The dual space $\widebar{\V}'$ is the space of signed measures $m$ on $(\U\times\X,\B)$, which are
absolutely continuous with respect to $\widebar{P}_0$, with
densities (Radon--Nikodym derivatives) lying in the space $\LL_q(\U\times\X,\B,P_0)$, where $1/p+1/q=1$.
We consider the set of probability measures in $\widebar{\V}'$:
\[
\M=\left\{ m\in\widebar{\V}': m(\U\times \X)=1,\; m\ge 0\right\}.
\]
We also assume that the spaces $\widebar{\V}$ and $\widebar{\V}'$ are endowed with topologies that make them paired topological vector spaces
with the bilinear form
\[
\langle \varphi, m \rangle = \int_{\U\times \X} \varphi(u,y) \; m(du\times dy), \quad \varphi\in \widebar{\V},\quad m\in \widebar{\V}'.
\]

We can now define a \emph{risk transition mapping} as
a measurable function
$\sigma:\V\times{\X}\times{\M}\to\R$ , such that
for every $x\in{\X}$ and every $m\in{\M}$,  the function $\varphi\mapsto\sigma(\varphi,x,m)$
is a coherent measure of risk on $\V$.

Consider a controlled Markov process $\{x_t\}$ with some randomized Markov policy $\varPi = \{\pi_1,\pi_2,\dots\}$. For a fixed time
 $t$ and a measurable $w$-bounded function $g:\X\times\U\times\X\to \R$,  the value of $Z_{t+1} = g(x_t,u_t,x_{t+1})$,
 where $u_t$ is distributed according to $\pi_t(x_t)$,  is a random variable.

We define now a \emph{Markov one-step conditional risk measure} $\rho_t(\cdot)$ by requiring that
\[
\rho_t\big(g(x_t,u_t,x_{t+1})\big) = \sigma\big(g(x_t,\cdot,\cdot),x_t,[\pi_t \comp Q]_{x_t}\big), \quad \text{a.s.}
\]
This is analogous to \eqref{Markov-risk}.

With these modifications, we can repeat the analysis presented in sections \ref{s:Markov}--\ref{s:infinite-DP}, with formal adjustments
accounting for the use of randomized policies. For example, the dynamic programming equations \eqref{DPopt1}-\eqref{DPopt2}
would take on the form:
\begin{align}
v(x) &= \inf_{\lambda(x)\in\P({U}(x))}\sigma\big(c(x,\cdot,\cdot)+ v(\cdot),x,[\lambda\comp Q]_{x}\big), \quad x\in \X, \label{DPopt1a}\\
v(x_{\rm A}) &=0. \label{DPopt2a}
\end{align}
Justification of these equations, though, is straightforward only in the case of \emph{finite control sets} $U(x)$. Otherwise,
serious technical difficulties arise, associated with the nonlinearity of the mapping
in \eqref{DPopt1a}. With the present state of our knowledge, though, we may already make an observation
that a randomized policy may be strictly better than a deterministic policy.

Observe that the mapping $\lambda(x)\mapsto \sigma\big(c(x,\cdot,\cdot)+v(\cdot),x,[\lambda\comp Q]_{x}\big)$, which plays the key role
in the dynamic programming equation \eqref{DPopt1a}, is nonlinear, in general, as opposed to
the expected value model, where
\[
\sigma\big(c(x,\cdot,\cdot)+ v(\cdot),x,[\lambda\comp Q]_{x}\big)
= \int\limits_{U(x)} \int\limits_{\X}\big( c(x,u,y)+ v(y)\big)\; Q(dy|x,u) \;\lambda(du|x) .
\]
In the expected value case, it is sufficient to consider only the extreme points of the set $\P\big(U(x)\big)$,
which are the measures assigning unit mass to one of the controls $u\in U(x)$:
\[
 \inf_{\lambda\in\P({U}(x))} \int\limits_{U(x)} \int\limits_{\X}\big( c(x,u,y)+ v(y)\big)\; Q(dy|x,u) \;\lambda(du|x) =
 \inf_{u\in U(x)} \int\limits_{\X}\big( c(x,u,y)+ v(y)\big)\; Q(dy|x,u).
\]
 In the risk averse case, this simplification is not justified and a randomized policy may be strictly better than any
 deterministic policy.

 A question arises whether it is possible to identify cases in which deterministic policies are sufficient. It turns out that
 we can prove this for the conditional Average Value at Risk from Example \ref{e:rtm-cvar}, which in our setting takes on the
 following form:
\begin{equation}
\label{OCE}
\sigma(\varphi,x,\mu) = \inf_{\eta\in\R}
\bigg\{ \eta + \frac{1}{\alpha}\int\limits_{U(x)\times \X}\big(\varphi(u,y)-\eta\big)_+\;\mu(du\times dy) \bigg\},\quad \alpha\in (0,1).
\end{equation}
 \begin{lemma}
 If the risk transition mapping has the form \eqref{OCE} then the dynamic programming equations \eqref{DPopt1a} have a solution in deterministic decision rules.
 \end{lemma}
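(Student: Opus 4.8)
The plan is to exploit the special infimal (Rockafellar--Uryasev) representation of the Average Value at Risk in \eqref{OCE}, which renders the objective in \eqref{DPopt1a} amenable to an interchange-of-infima argument. Using the definition \eqref{d:otimes} of the measure $[\lambda\comp Q]_x$, the right-hand side of \eqref{DPopt1a} at a fixed state $x$ becomes
\[
\inf_{\lambda\in\P(U(x))}\; \inf_{\eta\in\R}\; \bigg\{ \eta + \frac{1}{\alpha}\int_{U(x)} g_\eta(x,u)\;\lambda(du|x) \bigg\},
\]
where $g_\eta(x,u) = \int_\X \big(c(x,u,y)+v(y)-\eta\big)_+\;Q(dy|x,u)$. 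The crucial observation is that for every \emph{fixed} $\eta$ the bracketed expression is an \emph{affine} function of the decision rule $\lambda$.

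First I would interchange the two infima, which is always legitimate for an infimum taken over a product set, and then pull the $\eta$-dependent terms out of the inner minimization:
\[
\inf_{\lambda\in\P(U(x))} \inf_{\eta\in\R} \{\cdots\} = \inf_{\eta\in\R}\; \bigg\{ \eta + \frac{1}{\alpha}\inf_{\lambda\in\P(U(x))}\int_{U(x)} g_\eta(x,u)\;\lambda(du|x) \bigg\}.
\]
For each fixed $\eta$ the inner problem is the minimization of a bounded linear functional over the set of probability measures $\P(U(x))$, whose optimal value equals the pointwise infimum $\inf_{u\in U(x)} g_\eta(x,u)$; under (G4) the set $U(x)$ is compact and $g_\eta(x,\cdot)$ is lower semicontinuous, so this infimum is attained, that is, by a Dirac measure $\delta_u$.

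I would then interchange the infima over $\eta$ and over $u$ (again a product set), obtaining
\[
\inf_{u\in U(x)}\; \inf_{\eta\in\R}\; \bigg\{ \eta + \frac{1}{\alpha}\int_\X \big(c(x,u,y)+v(y)-\eta\big)_+\;Q(dy|x,u) \bigg\} = \inf_{u\in U(x)} \sigma\big(c(x,u,\cdot)+v(\cdot),x,Q(x,u)\big),
\]
where the inner infimum over $\eta$ is recognized as the Average Value at Risk evaluated at the deterministic control $u$, i.e. for $\lambda=\delta_u$, since $[\delta_u\comp Q]_x$ is supported on $\{u\}\times\X$ and reduces to $Q(\cdot|x,u)$. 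Thus the randomized equation \eqref{DPopt1a} coincides with the deterministic equation \eqref{DPopt1}, and the minimizing rule may be taken as $\lambda(x)=\delta_{\pi^*(x)}$ with $\pi^*(x)\in\argmin_{u\in U(x)}\sigma(c(x,u,\cdot)+v(\cdot),x,Q(x,u))$; its measurability is guaranteed by Proposition~\ref{p:meas}.

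The main obstacle is justifying the passage between the randomized and deterministic problems cleanly: the reduction of the inner linear minimization to $\inf_{u\in U(x)} g_\eta(x,u)$ rests on the affinity in $\lambda$ that holds only once $\eta$ is fixed, together with the compactness of $U(x)$ and the lower semicontinuity of $g_\eta(x,\cdot)$. This affine structure is precisely the feature of the Average Value at Risk that is \emph{absent} for general coherent risk measures, which is why the conclusion cannot be expected for, e.g., the mean--semideviation mapping, where the map $\lambda\mapsto\sigma(c(x,\cdot,\cdot)+v(\cdot),x,[\lambda\comp Q]_x)$ is genuinely nonlinear.
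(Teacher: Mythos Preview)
Your proof is correct and rests on the same structural feature the paper uses---the Rockafellar--Uryasev representation of AVaR, which makes the objective affine in $\lambda$ once $\eta$ is frozen---but the execution is organized differently. The paper proceeds via the inequality $\inf_\eta \int_{U(x)}(\cdots)\,\lambda(du) \ge \int_{U(x)} \inf_\eta(\cdots)\,\lambda(du)$ (interchange of infimum and integral), notes that equality holds at every Dirac measure, and then minimizes the linear right-hand side over $\lambda\in\P(U(x))$ to conclude that a Dirac measure attains the infimum on the left. You instead interchange infima directly, $\inf_\lambda\inf_\eta=\inf_\eta\inf_\lambda$, reduce the now-linear inner problem to $\inf_{u\in U(x)}g_\eta(x,u)$, and swap once more to recover the deterministic equation \eqref{DPopt1}. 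Your chain of equalities is slightly more elementary and avoids the inequality step altogether; the paper's version, by contrast, highlights more explicitly the gap between randomized and deterministic rules and the fact that it collapses precisely at Dirac measures. Neither argument dominates the other---they are equivalent in content and effort.
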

 \begin{proof}
 Interchanging the integration and the infimum in \eqref{OCE}, we obtain a lower bound
 \begin{align*}
 \sigma(\varphi,x,[\lambda\comp Q]_{x})
 &= \inf_{\eta\in \R} \int\limits_{U(x)} \int\limits_{\X}\big\{\eta + \big(\varphi(u,y) - \eta\big)_+\big\}\; Q(dy|x,u) \;\lambda(du|x) \\
 &\ge  \int\limits_{U(x)} \inf_{\eta\in \R}\int\limits_{\X}\big\{\eta + \big(\varphi(u,y) - \eta\big)_+\big\}\; Q(dy|x,u) \;\lambda(du|x).
 \end{align*}
 The above inequality becomes an equation for every Dirac measure $\lambda$.
 On the right-hand side of \eqref{DPopt1a} we have
 \[
\inf_{\lambda(x)\in\P({U}(x))}\sigma\big(c(x,\cdot,\cdot)+ v(\cdot),x,[\lambda\comp Q]_{x}\big)
\ge \inf_{\lambda\in\P({U}(x))}\int\limits_{U(x)} \inf_{\eta\in \R}\int\limits_{\X}\big\{\eta + \big(\varphi(u,y) - \eta\big)_+\big\}
\; Q(dy|x,u) \;\lambda(du|x).
 \]
  As the right hand side achieves its minimum over
 $\lambda\in\P({U}(x))$ at a Dirac measure concentrated at an extreme point of $U(x)$, and both sides coincide in this case,
  the minimum of the left hand side
 is also achieved at such measure. Consequently, for risk transition mappings of form \eqref{OCE} deterministic Markov
 policies are optimal.
 \hfill$\Box$\\
 \end{proof}

 In general, randomized policies may be better, as the example in section \ref{s:transplant} illustrates.

 \section{Illustrative Examples}
  \label{s:example}
  We illustrate our models and results on two examples.
 \subsection{Asset Selling}
 \label{s:house}
 Let us at first consider the classical example of asset selling originating from Karlin \cite{Karlin}.
 Offers $S_t$ arriving in time periods $t=1,2,\dots$ are independent integer-valued
 integrable random variables, distributed according to measure $P$.
 At each time we may accept the highest offer received so far, or we may wait, in which case a waiting cost $c_0$ is incurred. Denoting the
 random stopping time by $\tau$ we see that the total ``cost'' equals $Z= c_0\tau - \max_{0\le t \le \tau} S_t$.
 The problem is an example of an \emph{optimal stopping problem}, a structure of considerable theoretical and practical relevance
 (see, e.g., \c{C}inlar \cite{Cinlar}, Dynkin and Yushkevich \cite{DY69,DY79}, and Puterman \cite{Puterman}).

 Formally, we introduce the state space $\X=\{x_{\text{A}}\}\cupprod \{0,1,2,\dots\}$,
 where $x_{\text{A}}$ is the absorbing state reached after the transaction, and the other states represent the highest offer received so far.
 The control space is $\U=\{0,1\}$, with 0 representing ``wait'' and 1 representing ``sell.'' The state
 evolves according to the equation
 \[
 x_{t+1} = \begin{cases} \max( x_t, S_{t+1}) & \text{if $u_t=0$},\\
 x_{\text{A}}  & \text{if $u_t=1$}.
 \end{cases}
 \]
 Denoting by $F_S(\cdot)$ the distribution function of $S$, we can write the controlled transition kernel $Q$ as follows:
 \[
 Q(y|x,0) = \begin{cases}
 P(y) & \text{if}\ y>x,\\
 F_S(x) & \text{if}\ y=x,\\
 0 & \text{if}\ y<x,
 \end{cases}
 \qquad
 Q(y|x,0) = \begin{cases}
 1 & \text{if}\ y=x_{\text{A}},\\
 0 & \text{if}\ y\ne x_{\text{A}}.
 \end{cases}
 \]
 The cost function is
 \[
 c(x,u,y) = \begin{cases} c_0 & \text{if $u=0$},\\
 -x  & \text{if $u=1$}.
 \end{cases}
 \]
 The expected value version of this problem has a known solution: accept the first offer greater than or equal to
 the solution $\hat{x}$ of the equation
 \begin{equation}
 \label{house-expected}
 c_0 = \sum_{s=0}^{\infty} (s-\hat{x})_+\;P(s).
 \end{equation}

 We shall solve the risk-averse version of the problem.
 We choose $P_0(y)=P(y)/2$ for $y\in \mathbbm{N}$, and $P_0(x_{\text{A}})=1/2$.
 The space $\V$ is the space
 of $P_0$-integrable functions $v:\X\to \R$. The space $\M$ is the space of probability measures $\mu$ on $\X$,
 for which
 \[
 \sup\Big\{ \frac{\mu(y)}{P_0(y)} : P_0(y)>0,\ y\in \X\Big\} < \infty.
 \]
 Observe that all measures $Q(\cdot,x,u)$ are elements of $\M$.

Suppose $\sigma:\V\times\X\times\M\to\R$ is
a risk transition mapping.
Owing to (A3), and to the fact that $v(x_{\text{A}})=0$, we have
\[
\sigma\big(c(x,u,\cdot)+v(\cdot),x,Q(x,u)\big) =
\begin{cases}
c_0 + \sigma\big(v(\cdot),x,Q(x,0)\big) & \text{if}\ u=0, \\
-x                              & \text{if}\ u=1.
\end{cases}
 \]
Equation \eqref{DPopt1} takes on the form:
 \begin{equation}
 \label{DP-house}
 v(x) = \min\Big\{ -x, c_0 + \sigma\big(v(\cdot),x,Q(x,0)\big)\Big\},\quad x=0,1,2,\dots.
 \end{equation}
Suppose $\sigma$ is law invariant (Definition \ref{d:law}).
 As the distribution of $v(\cdot)$ with respect to the measure $Q(x,0)$ on $\mathbbm{N}$ is the same as the distribution
 of $v\big(\max(x,S)\big)$ under the measure $P$ of $S$, we obtain
 \[
  \sigma\big(v(\cdot),x,Q(x,0)\big) = \sigma\big(v\big(\max(x,\cdot)\big),x,P\big).
 \]
 Suppose our attitude to risk does not depend on the current state, that is, $\sigma(\cdot,\cdot,\cdot)$ does not depend on its second argument.
 Using \eqref{representation}, we may rewrite the last equation as follows:
 \[
 \sigma\big(v(\cdot),x,Q(x,0)\big) =\max_{\mu\in \A} \sum_{s=0}^{\infty} v\big(\max(x,s)\big)\; \mu(s).
 \]
 The convex closed set of probability measures $\A$ is fixed, because $\sigma(\cdot,\cdot,\cdot)$ does not depend on its second argument, and the
 third argument, $P$, is now fixed.
 Equation \eqref{DP-house} takes on the form
 \begin{equation}
 \label{DP-house-risk}
 v(x) = \min\Big\{ -x, c_0 + \max_{\mu\in \A} \sum_{s=0}^{\infty} v\big(\max(x,s)\big)\; \mu(s)\Big\},\quad x=0,1,2\dots.
 \end{equation}
 Observe that $v(x) \le -x$ and thus $v\big(\max(x,s)\big) \le -\max(x,s)$. The last displayed inequality implies that
 \[
 v(x) \le \min\Big\{ -x, c_0 + \max_{\mu\in \A} \sum_{s=0}^{\infty} \big[-\max(x,s)\big]\; \mu(s)\Big\}
 = \min\Big\{ -x, c_0 - \min_{\mu\in \A} \sum_{s=0}^{\infty} \max(x,s)\; \mu(s)\Big\},\quad x=0,1,2,\dots.
 \]
 If the offer at level $x$ is accepted, then $v(x) = -x$. After simplifications, we obtain the inequality:
 \[
 \min_{\mu\in \A} \sum_{s=0}^{\infty}(s-x)_+\;\mu(s) \le c_0.
 \]
 This suggests the solution:
 \emph{accept any offer that is greater or equal to the solution ${x}^*$ of the equation}
\begin{equation}
 \label{averse-house2}
 \min_{\mu\in \A} \sum_{s=0}^{\infty}(s-x)_+\;\mu(s) = c_0;
 \end{equation}
 \emph{if $x < {x}^*$, then wait.}
 The corresponding value function is
 $v^*(x) = -\max(x,{x}^*)$.
 Equation \eqref{DP-house-risk} can be verified by direct substitution.

Observe that the solution \eqref{averse-house2} of the risk-averse problem is closely related to the solution \eqref{house-expected}
of the expected value problem. The only difference is that we have to account for the least favorable distribution of the offers.
If $P\in \A$ (which is the typical situation), then the critical level $x^* \le \hat{x} $.

\subsection{Organ Transplant}
\label{s:transplant}
 We illustrate our results on a risk-averse version of a simplified organ transplant problem discussed in Alagoz et. al. \cite{Alagoz}.
 We consider the discrete-time absorbing Markov chain depicted in Figure~\ref{f:1}. State~S, which {is} the initial state,
 represents a patient in need of an organ transplant. State L represents life after a successful transplant.
 State D (absorbing state) represents death. Two control values
 are possible in state S: W (for ``Wait''), in which case transition to state D or back to state S may occur, and T (for ``Transplant''),
 which results in a transition to states L or D. The probability of death is lower for W than for T, but successful transplant may result in
 a longer life, as explained below. In {other two} states only one (formal) control value is possible: ``Continue''.

 \begin{figure}[h]
\centering
\vspace{-17ex}
\includegraphics[width=0.8\linewidth]{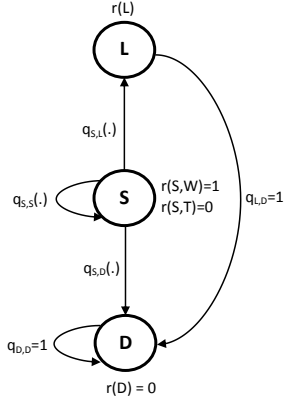}
\vspace{-52ex}
\caption{The organ transplant model.}
\label{f:1}
\end{figure}
 The rewards collected at each time step are months of life. In state S a reward equal {to} 1 is collected, if the control is W;
 otherwise, the immediate reward is 0.
 In state L the reward $r(\text{L})$ is collected, representing the sure equivalent of the random length of life after transplant.
 In state D the reward is 0.

 Generally, in a cost minimization problem, the value of a dynamic measure of risk \eqref{nested-2}
 is the ``fair'' sure charge one would be willing to incur, instead of a random sequence of costs.
 In our case, which will be a maximization problem, we shall work with the negatives of the months of life as our ``costs.'' The value of
 the measure of risk, therefore, can be interpreted as the negative of a sure life length which we consider to be equivalent to the random life duration faced by the patient.

 Let us start from describing the way the deterministic equivalent length of life $r(\text{L})$ at state L is calculated.
 The state L is in fact an aggregation
 of $n$ states in a survival model representing months of life after transplant, as depicted in Figure \ref{f:2}.

 \begin{figure}[h]
\centering
\vspace{-20ex}
\includegraphics[width=\linewidth]{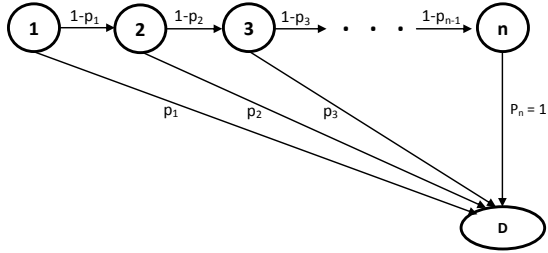}
\vspace{-36ex}
\caption{The survival model.}
\label{f:2}
\end{figure}

 In state $i=1,\dots,n$, the patient dies with
 probability $p_i$ and survives with probability $1-p_i$. The probability $p_n=1$. The reward collected at each state $i=1,\dots,n$ is equal to 1.
 In order to follow the notation of our paper, we define the cost $c(\cdot)=-r(\cdot)$.
 For illustration, we apply the mean--semideviation model of Example \ref{e:rtm-semi} with $\kappa=1$.

 The risk transition mapping has the form:
 \begin{equation}
 \label{survival-sigma}
 \sigma(\varphi,i,\nu) = \underbrace{\E_\nu[\varphi]}_{\text{expected value}} +
 \kappa \underbrace{\E_\nu\big[ \big(\varphi-\E_\nu[\varphi]\big)_+\big]}_{\text{semideviation}}.
 \end{equation}
 Owing to the monotonicity property (A2), $\sigma(\varphi,i,\nu) \le 0$, whenever $\varphi(\cdot) \le 0$.

 In \eqref{survival-sigma}, the measure $\nu$ is the transition kernel at the current state $i$,
 and the function $\varphi(\cdot)$ is the cost incurred at the current state and
 control plus the value function at the next state.
 At each state $i=1,\dots,n-1$ two transitions are possible: to D with probability $p_i$ and  $\varphi=-1$, and to $i+1$ with probability $1-p_i$ and  $\varphi=-1+v_{i+1}(i+1)$. At state $i=n$ the transition to D occurs with probability 1, and  $\varphi=-1$.
 Therefore, $v_n(n)=-1$.

 The survival problem is a finite horizon problem, and thus we apply equation  \eqref{DP-generic}.
  As there is no control to choose, the minimization operation in is eliminated.
 The equation has the form:
 \[
 v_i(i) = \sigma(\varphi,i,Q_i), \quad i=1,\dots,n-1,
 \]
 with $\varphi$ and $Q_i$ as explained above. By induction, $v_i(i)\le 0$, for $i=n-1,n-2,\dots,1$.

 Let us calculate the mean and semideviation components of \eqref{survival-sigma} at states $i=1,\dots,n-1$:
 \begin{align*}
 &\E_{Q_i}[\varphi] = -p_i + (1-p_i)\big(-1+v_{i+1}(i+1)) = -1 + (1-p_i)v_{i+1}(i+1), \\
 &\E_{Q_i}\big[ \big(\varphi-\E_{Q_i}[\varphi]\big)_+\big] = \E_{Q_i}\big[ \big(\varphi + 1 - (1-p_i)v_{i+1}(i+1)\big)_+\big] \\
 & {\qquad} = p_i\big( - 1 + 1 - (1-p_i)v_{i+1}(i+1)\big)_+ + (1-p_i)\big( -1+v_{i+1}(i+1) + 1 - (1-p_i)v_{i+1}(i+1)\big)_+ \\
  & {\qquad} = p_i\big( - (1-p_i)v_{i+1}(i+1)\big)_+ + (1-p_i)\big(p_i v_{i+1}(i+1)\big)_+ \\
   & {\qquad} = -p_i(1-p_i)v_{i+1}(i+1).
 \end{align*}
 In the last equation we used the fact that $v_{i+1}(i+1)\le 0$.
 For $i=1,\dots,n-1$, the dynamic programming equations \eqref{DP-generic} take on the form:
 \[
 v_i(i) = \underbrace{-1 + (1-p_i)v_{i+1}(i+1)}_{\text{expected value}} - \kappa \underbrace{p_i(1-p_i)v_{i+1}(i+1)}_{\text{semideviation}},
 \quad i=n-1,n-2,\dots,1.
 \]
 The value $v(1)$ is the negative of the risk-adjusted length of life with new organ. For $\kappa=0$ the above formulas give the negative of the expected
 length of life with new organ.

 In our calculations we used the transition data provided in Table \ref{t:1}. They have been chosen for purely illustrative purposes and do not correspond
 to any real medical situation.
\begin{table}[h]
\caption{ Transition probabilities from state S.}
\begin{center}
\begin{tabular}{c|ccc}
\hline
Control & S & L & D \\
\hline
W & 0.99882 &   0 & 0.00118   \\
T & 0 & 0.90782 & 0.09218\\
\hline
\end{tabular}
\end{center}
 \label{t:1}
\end{table}

For the survival model, we used the distribution function, $F(x)$, of lifetime of the American population from Jasiulewicz \cite{Jasiulewicz}.
It is a mixture of Weibull, lognormal, and Gompertz distributions:
\[
F(x)=w_{1}\Big(1-\exp\Big(-\Big(\frac{x}{\delta}\Big)^\beta\Big)\Big) + w_{2}\Phi \Big(\frac{\log x -m}{\sigma}\Big) + w_{3}\Big(1-\exp\Big(-\frac{b}{\alpha}(e^{\alpha x}-1)\Big)\Big), \quad x \geq 0.
\]
The values of the parameters and weights, provided by Jasiulewicz \cite{Jasiulewicz}, are given in Table \ref{t:2}.

\begin{table}[h]
\caption{ Values of parameters for $F(x)$.}
\begin{center}
\begin{tabular}{c|cc}
\hline
Distribution & Parameters & Weights\\
\hline
Weibull & $\delta = 0.297$, $\beta = 0.225$ &  $w_{1} = 0.0170$ \\
Lognormal & $m = 3.11$, $\sigma = 0.218$ & $w_{2} = 0.0092$ \\
Gompertz& $b = 0.0000812$, $\alpha = 0.0844$ & $w_{3} = 0.9737$ \\
\hline
\end{tabular}
\end{center}
 \label{t:2}
\end{table}

Then, we calculated the probability of dying at age $k$ (in months) as follows:
\[
p_{k} = \frac{F(k/12+1/24)-F(k/12-1/24)}{1-F(k/12-1/24)}, \quad k=1,2,\dots.
\]
The maximum lifetime of the patient was taken to be 1200 months, and that the patient after transplant has survival probabilities starting from $k=300$.
Therefore, $n=900$ in the survival model used for calculating $r(\text{L})$.

Let $\lambda=(\lambda_{_{\text{W}}},\lambda_{_{\text{T}}})$ be the randomized policy in the state S and let
$\Lambda=\big\{\lambda\in\R^2: \lambda_{_{\text{W}}}+\lambda_{_{\text{T}}} =1,\; \lambda\ge 0\big\}$.
The dynamic programming equation (\ref{DPopt1}) at S takes on the form
\begin{multline*}
v(\text{S}) =
\min_{\lambda\in\Lambda}\bigg\{
\underbrace{\lambda_{_{\text{W}}} \big[ q_{_{\text{S,S}}}(\text{W})\big(v(\text{S})-1)\big)+  q_{_{\text{S,D}}}(\text{W})\big(v(\text{D})-1\big)\big]
+ \lambda_{_{\text{T}}}\big[ q_{_{\text{S,L}}}(\text{T})v(\text{L})+  q_{_{\text{S,D}}}(\text{T})v(\text{D})\big]
}_{\text{expected value $\mu$}}\\
+ \kappa \Big(\underbrace{\lambda_{_{\text{W}}}\big[ q_{_{\text{S,S}}}(\text{W})\big(v(\text{S})-1 - \mu\big)_+  +  q_{_{\text{S,D}}}(\text{W})\big(v(\text{D})-1-\mu\big)_+ \big]
}_{\text{semideviation \dots}} \\
+ \underbrace{ \lambda_{_{\text{T}}} \big[ q_{_{\text{S,L}}}(\text{T})\big(v(\text{L}) - \mu\big)_+ +
 q_{_{\text{S,D}}}(\text{T})\big(v(\text{D}) - \mu\big)_+\big] \Big)
 }_{\text{\dots semideviation}}
\bigg\}.
\end{multline*}
 In the semideviation parts, we wrote $\mu$  for the expectation of the value
function in the next state, which is given by the first underbraced expression, and which is also dependent on $\lambda$. Of course, the above expression can be simplified,
by using the fact that $v(\text{L}) < v(\text{S}) < v(\text{D})=0$, but we prefer to leave it in the above form to illustrate the way it has been developed.

We compared two optimal control models for this problem.  The first one was the expected value model ($\kappa=0$), which corresponds to the
expected reward $r(\text{L}) = 610.46$ in the survival model. Standard dynamic programming equations were solved, and the optimal decision in state S
turned out to be W.

The second model was the risk-averse model using the mean--semideviation risk transition mapping with $\kappa=1$. This changed the
reward at state L to 515.35. We considered two versions of this model. In the first version,
we restricted the feasible policies to be deterministic. In this case, the optimal action in state
S was T. In the second version, we allowed randomized policies, as in our general model. Then the optimal policy in state S was
W with probability $\lambda_{_{\text{W}}}=0.9873$ and T with probability $\lambda_{_{\text{T}}}=0.0127$.

How can we interpret these results? The optimal randomized policy results in a random waiting time before transplanting the organ. This is
due to the fact that immediate transplant entails a significant probability of death, and a less risky policy is to ``dilute'' this
probability in a long waiting time. This cannot be derived from an expected value model, no matter what the data, because
deterministic policies are optimal in such a model: either transplant immediately or never.



{

}

\end{document}